\newtheorem{theorem}{Theorem}
\newtheorem{corollary}[theorem]{Corollary}
\newtheorem{example}[theorem]{Example}
\newtheorem{proposition}[theorem]{Proposition}
\newtheorem{lemma}[theorem]{Lemma}
\newtheorem{definition}[theorem]{Definition}
\newtheorem{remark}[theorem]{Remark}
\newtheorem{theorem*}{Theorem}
\newtheorem{question*}[theorem*]{Question}
\newtheorem{conjecture*}[theorem*]{Conjecture}
\newtheorem{corollary*}[theorem*]{Corollary}
\newtheorem{theorem*e}{Teorema}
\newtheorem{question*e}[theorem*e]{Pregunta}
\newtheorem{conjecture*e}[theorem*e]{Conjetura}
\newtheorem{corollary*e}[theorem*e]{Corolario}
\newcommand{\Fix}{\mathrm{Fix}}
\newcommand{\Per}{\mathrm{Per}}
\renewcommand{\int}{\mathrm{int}}
\renewcommand{\deg}{\mathrm{deg}}
\renewcommand{\det}{\mathrm{det}}
\newcommand{\id}{\mathrm{id}}
\newcommand{\dist}{\mathrm{dist}}
\newcommand{\R}{\mathds{R}}
\newcommand{\Z}{\mathds{Z}}
\newcommand{\z}{\mathcal{Z}}
\newcommand{\N}{\mathds{N}}
\newcommand{\Q}{\mathds{Q}}
\newcommand{\C}{\mathds{C}}
\newcommand{\e}[1]{\overset{(#1)}{=}}
\title{Uniqueness of dynamical zeta functions and symmetric products}
\author[E. Blanco Gómez]{Eduardo Blanco Gómez}
\address{Facultad de Matemáticas UCM, Plaza de Ciencias 3, Madrid, Spain.}
\email{eduardo.blanco.gomez@gmail.com}
\author[L. Hernández--Corbato]{Luis Hernández--Corbato}
\address{IMPA, Estrada dona Castorina 110, Rio de Janeiro, Brazil.}
\email{luishcorbato@mat.ucm.es, lcorbato@impa.br}
\author[F. R. Ruiz del Portal]{Francisco R. Ruiz del Portal}
\address{Facultad de Matemáticas UCM, Plaza de Ciencias 3, Madrid, Spain.}
\email{rrportal@ucm.es}
\thanks{The authors have been supported by MINECO, MTM2012-30719.
The second author has also been supported by CNPq.}
\begin{document}

\begin{abstract}
A characterization of dynamically defined zeta functions is presented.
It comprises a list of axioms, natural extension of the one which characterizes topological degree, and a uniqueness theorem.
Lefschetz zeta function is the main (and proved unique) example of such zeta functions.
Another interpretation of this function arises from the notion of symmetric product
from which some corollaries and applications are obtained.

\end{abstract}

\maketitle
\section{Introduction}

Solving equations or counting the number of zeros of a function has been one the major tasks of
mathematics since its very beginning. In various settings, this number only gains relevance when
it is robust, i. e., it is not affected by small perturbations of the map or equation. Clearly,
the number of zeros of $f(x) = x^2 + c$ is not robust as $c$ goes through 0. Both solutions of $x^2 + c = 0$
for $c < 0$ collapse into one and then vanish as $c$ goes positive.
The topological degree assigns multiplicities to each zero in a way that their sum is preserved.

After the contribution of many of the most renowned mathematicians, it was Hopf who settled the concept
of topological degree, which he called Brouwer--Kronecker degree as their works were crucial in the foundation of this theory.
Leray and Schauder extended the definition of degree from polyhedra to Banach spaces,
Nagumo proposed an axiomatization together with an uniqueness result which was proved
independently by F\"{u}hrer \cite{fuhrer} in the Euclidean case and by Amann and Weiss
\cite{amannweiss} in a more general setting.

There are natural restrictions on the maps to which a degree is associated. For example, the case
in which a zero of a map is located in the boundary of its domain can not be handled properly.
In $\R^d$, admissible maps
$f: \overline{U} \to \R^d$ are those continuous maps defined in the closure of an open subset $U$ of $\R^d$
such that $f^{-1}(0) \cap \partial U = \emptyset$ and $f^{-1}(0) \cap U$ is compact.
The axiomatization and uniqueness, stated in Subsection \ref{subsec:z1istheindex},
shows that the topological degree is characterized by the value
given to the identity map, an additivity property for disjoint domains and homotopy invariance.

The direct translation of the concept of degree to account for fixed points is called fixed point index.
The index of a map $f$ is defined as the degree of the map $\id - f$. Given an open subset
$U$ of $\R^d$ and a continuous map $f: \overline{U} \to \R^d$, the fixed point index $i(f, U)$ of $f$ in $U$
is an integer which is well--defined as long as $\Fix(f) \cap \overline{U} = \Fix(f) \cap U$
and this set is compact. The index can be extended to Euclidean neighborhood retracts (ENRs).
The reader is referred to \cite{doldindice} for the definition and to \cite{granas, JM} for a complete account on 
fixed point index.
This invariant counts the number of fixed points of $f$ in $U$ with multiplicity.
It makes sense to look at the iterates of $f$, denoted $f^n$. The value $i(f^n, U)$
depends on the points periodic under $f$ and whose period is divisible by $n$. The sequence $(i(f^n, U))_{n \ge 1}$
is encoded in the so--called Lefschetz or homological zeta function
$$\exp \left( \sum_{n \ge 1} \frac{i(f^n, U)}{n} \cdot t^n \right).$$
Lefschetz--Hopf fixed point theorem, where applicable, guarantees this formal power series to be rational.
This is the case, for example, when $U$ is a closed manifold or a compact ENR.
Rationality is a consequence of the fixed point indices being homological invariants of $f$.
Note however that if the index $i(f^n)$ is replaced by the number $\#\Fix(f^n)$ of fixed points of $f^n$
the resulting zeta function, typically named after Artin and Mazur, is not rational in general in the absence
of hyperbolicity.

In this paper, an axiomatic characterization of the Lefschetz zeta function is given.
Both the statement and the proof of the uniqueness go along the lines of aforementioned results for the topological degree.
Let $U$ be an open subset of $\R^d$ and $f: \overline{U} \to \R^d$ continuous.
The pair $(f, U)$ is admissible provided that, for every $k \ge 1$,
$\Fix(f^k) \cap \overline{U} = \Fix(f^k) \cap U$ and it is compact.
Denote $\mathcal{A}(U)$ the set of all admissible pairs and
$1 + t \cdot \Z [[t]]$ the group of formal monic power series with integer coefficients.
The main result of the article reads as follows.

\begin{theorem}\label{thm:main}
There exist just one function 
$$\z: \mathcal{A}(U) \to 1 + t \cdot \Z [[t]]$$
which satisfies the following properties:
\begin{itemize}
\item[(N)] \textbf{Normalization.} The value for the constant map $c_p : \overline{U} \to \R^d$
which sends every point to $p \in U$ is
$$\z(c_p, U)(t) = 1 / (1 - t).$$
\item[(M)] \textbf{Multiplicativity.} Let $(f, U)$ be an admissible pair and $V, W$ disjoint
open subsets of $U$ such that $\Per(f) \cap U = \Per(f) \cap (V \cup W)$ and $\Per(f) \cap V, \Per(f)\cap W$
are invariant under $f$. Then,
$$\z(f, U) = \z(f_{|V}, V)\z(f_{|W}, W).$$
\item[(H)] \textbf{Homotopy invariance.} If $\{(h_t, U)\}_{t=0}^1$ is an admissible homotopy then
$$\z(h_0, U) = \z(h_1, U).$$
\item[(I)] \textbf{Iteration.} Let $(f, U)$ be an admissible pair and assume that $U$ is the disjoint union
of $k \ge 1$, $U = U_1 \cup \ldots \cup U_{k}$, such that $f(\Per(f) \cap U_i) \subset U_{i+1}$ (indices are taken mod $k$). Then
$$\z(f, U)(t) = \z((f^k)_{|U_1}, U_1)(t^k).$$
\end{itemize}
\end{theorem}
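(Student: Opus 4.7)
I would first establish existence by noting that the Lefschetz zeta function
$\z_L(f, U)(t) := \exp\bigl(\sum_{n\ge 1} i(f^n, U) t^n / n\bigr)$ satisfies all four axioms. Property (N) is immediate from $i(c_p^n, U) = 1$, while (M) and (H) are inherited from the additivity and homotopy invariance of the fixed point index. For (I), one checks that if $U = U_1 \cup \cdots \cup U_k$ is a cyclic decomposition then $i(f^n, U) = 0$ when $k \nmid n$ and $i(f^n, U) = k \cdot i((f^k)^{n/k}, U_1)$ when $k \mid n$; exponentiating the corresponding power series yields the substitution $t \mapsto t^k$.

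For uniqueness, let $\z$ satisfy (N)--(I). Since both $\z(f, U)$ and $\z_L(f, U)$ lie in $1 + t\Z[[t]]$, it suffices to prove the congruence $\z(f, U) \equiv \z_L(f, U) \pmod{t^{n+1}}$ for every $n \ge 1$. Fix such an $n$. Using a generic perturbation argument together with (H), I would admissibly homotope $f$ to a map $g$ whose periodic points of period at most $n$ form a finite union of isolated orbits, the perturbation being localized away from $\partial U$ so that admissibility is preserved throughout the homotopy.

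Next I would decompose $\z(g, U)$ using (M). Let $O_1, \ldots, O_r$ be the periodic orbits of $g$ of period at most $n$, pick pairwise disjoint small neighborhoods $V_j \supset O_j$ inside $U$, and let $U'$ be a complementary open set containing the remaining periodic points (all of period greater than $n$). Repeated application of (M) gives
\[
\z(g, U) = \prod_{j=1}^{r} \z(g, V_j) \cdot \z(g, U').
\]
A preliminary lemma, using (M) together with (I), shows that $\z(g, U')$ is congruent to $1$ modulo $t^{n+1}$ since no periodic orbit in $U'$ has period at most $n$. For each orbit $O_j$ of period $k_j$, write $V_j = V_j^{(1)} \cup \cdots \cup V_j^{(k_j)}$ as a cyclic decomposition and apply (I) to get $\z(g, V_j)(t) = \z(g^{k_j}|_{V_j^{(1)}}, V_j^{(1)})(t^{k_j})$; this reduces the factor to the value of $\z$ on a single isolated fixed point of an iterate of $g$.

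The remaining and most delicate step is to determine $\z$ on a model map near a single isolated fixed point. After passing via (H) to a linear approximation, I would proceed by induction on the ambient dimension and on the unstable dimension. The base case of a contracting linear map is handled by admissibly scaling the map to a constant, so that (N) gives $\z = 1/(1-t)$. The inductive step is the heart of the argument: naive linear deformations between maps of different unstable indices fail to be admissible, typically introducing non-hyperbolic periodic points or entire lines of fixed points. Instead, one embeds the model into a higher-dimensional product, constructs cyclic configurations of period two that realize the reflection of an unstable direction, and invokes (I) to trade unstable dimensions for higher iterates, eventually reducing to a case already handled by the induction. Executing these reductions while preserving admissibility throughout is the main technical challenge of the proof.
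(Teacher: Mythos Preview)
Your overall architecture---existence via the Lefschetz zeta function, uniqueness by perturbing to a map with isolated low-period orbits, decomposing via (M), reducing each orbit to a fixed point via (I), and then linearizing---is exactly the paper's strategy. Two points deserve comment.

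First, there is a genuine gap in your decomposition step. Axiom (M) as stated requires that $\Per(g)\cap V_j$ be $g$--invariant, not merely $\Per_n(g)\cap V_j$. After your perturbation you only control periodic points of period at most $n$; orbits of higher period may accumulate on each $O_j$, so no choice of small $V_j$ will satisfy the invariance hypothesis of (M), and the product formula $\z(g,U)=\prod_j \z(g,V_j)\cdot \z(g,U')$ is not justified. The same obstruction blocks your ``preliminary lemma'' that $\z(g,U')\equiv 1 \pmod{t^{n+1}}$. The paper resolves this by introducing \emph{$n$--admissibility} and proving that any $\z$ satisfying (N)--(I) induces, for each $n$, a function $\z_n$ on $n$--admissible pairs satisfying weakened axioms (N)$_n$, (M)$_n$, (H)$_n$, (I)$_n$ in which only $\Per_n$ appears. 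The decomposition is then carried out with (M)$_n$, and localization (a consequence of (M)$_n$) makes the $U'$ piece disappear altogether rather than requiring a separate lemma.

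Second, your treatment of the linear case diverges from the paper's and is left vague at the crucial point. The paper does not use induction on the unstable dimension nor embed into higher-dimensional products. Instead it moves the eigenvalues of the linear map by admissible homotopies (conjugating to Jordan form, stripping the nilpotent part, sliding real eigenvalues to $\{-2,0,2\}$, and cancelling complex and repeated eigenvalues in pairs through paths avoiding roots of unity) until only four model matrices remain, indexed by the parities $(\sigma^-,\sigma^+)$. Three of these are computed by explicit one-dimensional homotopies (the case of a negative expanding eigenvalue does use (I) with $k=2$, which is perhaps what you have in mind), and the fourth, $(\sigma^-,\sigma^+)=(1,1)$, is handled by an explicit two-dimensional saddle-collision homotopy. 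Your proposed inductive scheme may well work, but the step you flag as ``the main technical challenge'' is precisely where the paper supplies concrete constructions, and you have not indicated how the embedding or the period-two configurations would be built while preserving admissibility.
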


The axioms are analogous to the ones of the topological degree except for the last new one.
It basically establishes how the power series integrates the information of periodic orbits.
A function satisfying (N), (M), (H) and (I) is called a dynamical zeta function.

The $n^{th}$ symmetric product $SP_n(X)$ of a topological space $X$ is the quotient of $X^n$
by the action of the $\Sigma_n$ group of permutations of $n$ elements. The image of $(x_1, \ldots, x_n)$ under
the projection map is the unordered $n$--tuple $[x_1, \ldots, x_n]$.
A continuous map $f: U \subset X \to X$ induces a map $SP_n(f): SP_n(U) \to SP_n(X)$ which sends
$[x_1, \ldots, x_n]$ to $[f(x_1), \ldots, f(x_n)]$. Note that if $x = f^n(x)$ is a periodic point then
$[x, f(x), \ldots, f^{n-1}(x)]$ is a fixed point of $SP_n(f)$.
Thus, it is possible to study the periodic orbits of $f$ using the maps $SP_n(f)$ instead of $f^n$.

The topological study of symmetric products and its mere description was subject of many articles mostly during
the middle decades of last century, see for instance \cite{BU, doldhomologiaSP, doldthom, macdonald}.
Topological properties are typically inherited, for example $SP_n(X)$ is a ENR provided that $X$ is also a ENR.
There are several papers in the literature where the authors (see \cite{Gor, Mas, Mas1, Ra})
study a fixed point index in symmetric products but from the point of view of multivalued maps,
i.e. for maps $F:X \rightarrow SP_n(X)$.
A fixed point index in hyperspaces which exploits
the good local properties of the hyperspace of polyhedra (see \cite{Cu78, CuSc}) is introduced in \cite{RuSaII}.
In the same direction, in \cite{Sal2}, the author proposes a construction similar to ours but
working in the spaces $F_n(X)$ of finite sets of $X$ with at most $n$ elements.
The main problem of that approach is the absence of an additivity property which makes computations very difficult.

For any admissible pair $(f, U)$
the integer sequence $(i(SP_n(f), SP_n(U)))_{n \ge 1}$ is well--defined. These indices fit in the preceding setting as follows.

\begin{proposition}\label{prop:iszeta}
$SP_{\infty}: \mathcal{A}(U) \to 1 + t \cdot \Z [[t]]$ defined by
$$SP_{\infty}(f, U) = 1 + \sum_{n \ge 1} i(SP_n(f), SP_n(U)) \cdot t^n$$
is a dynamical zeta function, i.e, 
satisfies axioms (N), (M), (H) and (I) in Theorem \ref{thm:main}.
\end{proposition}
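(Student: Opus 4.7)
The plan is to transfer each of the four axioms into a classical property of the fixed point index on $SP_n(U)$. The common infrastructure is that $SP_n(U)$ is open in the ENR $SP_n(\R^d)$, that $SP_n$ carries disjoint unions to $SP_n(V\sqcup W) = \bigsqcup_{k=0}^n SP_k(V)\times SP_{n-k}(W)$, and that $[x_1,\ldots,x_n]$ is fixed by $SP_n(f)$ if and only if $f$ permutes the multiset $\{x_1,\ldots,x_n\}$; in particular every such point is $f$-periodic, so admissibility of $(f,U)$ propagates to admissibility of every $(SP_n(f),SP_n(U))$, making each coefficient $i(SP_n(f),SP_n(U))$ well defined.

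Axioms (N), (H) and (M) are essentially formal. For (N), $SP_n(c_p)$ is constant at the interior point $[p,\ldots,p]$ and hence has index $1$, giving $\sum_{n\ge 0} t^n = 1/(1-t)$. For (H), an admissible homotopy $\{(h_t,U)\}$ induces an admissible homotopy $\{(SP_n(h_t),SP_n(U))\}$ because uniform compactness of $\bigcup_t\Fix(h_t^j)$ inside $U$ lifts to uniform compactness of $\bigcup_t\Fix(SP_n(h_t))$ inside $SP_n(U)$; classical homotopy invariance of the index then preserves each coefficient. For (M), the hypotheses on $\Per(f)$ locate all fixed points of $SP_n(f)$ inside the disjoint union $\bigsqcup_k SP_k(V)\times SP_{n-k}(W)\subset SP_n(U)$, on whose $k$-th piece $SP_n(f)$ restricts to the product $SP_k(f|_V)\times SP_{n-k}(f|_W)$. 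Additivity and the product formula of the index then yield
$$i(SP_n(f),SP_n(U)) = \sum_{k=0}^n i(SP_k(f|_V),SP_k(V))\cdot i(SP_{n-k}(f|_W),SP_{n-k}(W)),$$
which is the $n$-th Cauchy coefficient of $SP_\infty(f|_V,V)\cdot SP_\infty(f|_W,W)$.

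Axiom (I) carries the real content. The cyclic hypothesis $f(\Per(f)\cap U_i)\subset U_{i+1}$ forces every $f$-periodic orbit to visit each $U_i$ the same number of times and hence to have length a multiple of $k$, so $SP_n(f)$ has no fixed point in $SP_n(U)$ unless $k\mid n$. For $n=mk$, the disjoint-union formula decomposes $SP_{mk}(U)$ into products $\prod_{i=1}^k SP_{m_i}(U_i)$, and the same orbit argument forces $m_1=\cdots=m_k=m$ on any fixed-point-containing piece. Writing $g_i := SP_m(f|_{U_i}): SP_m(U_i)\to SP_m(U_{i+1})$, the map $SP_{mk}(f)$ on $\prod_{i=1}^k SP_m(U_i)$ takes the cyclic form $H(Y_1,\ldots,Y_k) = (g_k(Y_k), g_1(Y_1),\ldots,g_{k-1}(Y_{k-1}))$.

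The step I expect to be the main obstacle is the identity
$$i\bigl(H,\, \prod\nolimits_{i=1}^k SP_m(U_i)\bigr) = i(g_k\cdots g_1,\, SP_m(U_1)).$$
I plan to derive it from iterated commutativity of the fixed point index on ENRs: for composable maps $\varphi:A\to B$ and $\psi:B\to A$ between open subsets of ENRs, $i(\psi\varphi,A) = i(\varphi\psi,B)$. The strategy is to apply this with $\varphi(Y_1) := (Y_1,g_1(Y_1),\ldots,g_{k-1}\cdots g_1(Y_1))$ and $\psi(Y_1,\ldots,Y_k) := g_k(Y_k)$, so that $\psi\varphi = g_k\cdots g_1$ on $SP_m(U_1)$ while $\varphi\psi$ is a self-map of $\prod_i SP_m(U_i)$ sharing with $H$ both its fixed point set and its value on $\varphi(SP_m(U_1))$; a further admissible homotopy between $H$ and $\varphi\psi$, or an additional commutativity step, then identifies their indices on the product. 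Once this identity is in hand, summing yields $SP_\infty(f,U)(t) = \sum_{m\ge 0} i(SP_m(f^k|_{U_1}),SP_m(U_1))\,t^{mk} = SP_\infty((f^k)|_{U_1},U_1)(t^k)$, which is axiom (I).
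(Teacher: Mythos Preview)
Your proposal is correct and follows essentially the same route as the paper. The treatment of (N), (M), (H) is identical, and for (I) the paper likewise excises to the balanced piece $\prod_{i=1}^k SP_m(U_i)$, then proves the cyclic index identity $i(G,\prod V_i)=i(g^k,V_1)$ by exactly your mechanism: commutativity of the index applied to the section $s(x_1)=(x_1,g(x_1),\ldots,g^{k-1}(x_1))$ (your $\varphi$) paired with a projection-type map, followed by an explicit linear homotopy $H_t$ from $G\circ s\circ\pi_1$ back to $G$ (which is what your ``further admissible homotopy between $H$ and $\varphi\psi$'' amounts to). The only cosmetic difference is that the paper states the cyclic claim for a single map $g$ on $V_1\cup\cdots\cup V_k\subset\R^q$ after embedding the symmetric products in Euclidean space, whereas you phrase it with distinct $g_i$ on ENRs; the content is the same.
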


From the uniqueness we recover a result of Dold \cite{doldparma}: $SP_{\infty}$ is equal to the Lefschetz zeta function.
The authors have learnt that a brief sketch of an allegedly independent proof of this equality
appeared in a work of Salamon \cite{salamon}.
Besides, as a corollary, already included in Dold's work, a formula due to Macdonald \cite{macdonald}
for the Euler characteristic of $SP_n(X)$ is deduced.

%
%

The paper is organized as follows. Theorem \ref{thm:main} is proved in Section \ref{sec:unicidadzeta},
where the abstract notion of dynamical zeta function, a function satisfying the axioms of Theorem \ref{thm:main},
is introduced. Section \ref{sec:maszeta} explores more properties of this concept and relates it to
the fixed point index by proving the Lefschetz zeta function satisfies the axioms.
Symmetric products and their corresponding zeta function are the content of Section \ref{sec:productossimetricos}.
Some corollaries of the results are shown in Section \ref{sec:applications} along with a digression concerning
planar dynamics. Finally, there is an appendix containing notation and basic results of power series and combinatorics.

\section{Uniqueness of the dynamical zeta function}\label{sec:unicidadzeta}

\subsection{Axioms}
Given an open subset $U$ of $\R^d$  and a map $f : \overline{U} \to \R^d$,
the pair $(f, U)$ is called admissible provided that, for every $k \ge 1$,
$\Fix(f^k) \cap \overline{U} = \Fix(f^k) \cap U$ and it is compact.
A continuous family of maps $h_t : U_0 \to \R^d$, $t \in [0,1]$, forms an admissible homotopy
$\{(h_t, U)\}_{t=0}^1$ if
$$\bigcup_{t \in [0,1]} \{(t, x): x \in \Fix(h_t^k) \cap \overline{U}\}$$
is compact and it is contained in $[0,1] \times U$ for every $k \ge 1$.

Notice that the previous definitions do not assume compactness of the set of periodic points,
denoted $\Per(f)$, within $U$. Merely compactness of $\Fix(f^k)$,
for every $k \ge 1$, or, equivalently, compactness of $\Per_m(f)$, for every $m \ge 1$, is needed.
$\Per_m(f)$ denotes the set of periodic points whose period is bounded by $m$.
A similar remark applies to homotopies.

The set of all admissible pairs $(f, U)$ for a fixed open set $U \subset \R^d$ is denoted $\mathcal{A}(U)$.
A dynamical zeta function $\z$ is a map
$$\z: \mathcal{A}(U) \to 1 + t \cdot \Z [[t]]$$
which satisfies
all four axioms in Theorem \ref{thm:main}: (N) Normalization, (M) Multiplicativity, (H) Homotopy invariance and (I) Iteration.

\begin{remark}
The admissibility of $(f, U)$ automatically implies
the pairs $(f_{|V}, V), (f_{|W}, W)$ in axiom (M) and $((f^k)_{|U_1}, U_1)$ in axiom (I) are admissible as well.
\end{remark}

Let us extract some information from the definition of $\z$. Since the pairs $(f, \emptyset)$ and $(f, \emptyset)$ are admissible
and their domains have empty intersection, Multiplicativity (M) gives
$$\z(f, \emptyset) = \z(f, \emptyset)^2,$$
hence $\z(f, \emptyset) = 1$. From this easy calculation we deduce an important property:

(L) \textbf{Localization}. Assume $(f, U)$ is admissible and $V$ is an open subset of $U$ which contains all
periodic points of $f$. Then,
$$\z(f_{|V}, V) = \z(f, U).$$

\begin{proof}
It is direct application of (M) for $W = \emptyset$.
\end{proof}

\begin{remark}
It is evident from (M) that multiplicativity also holds if the periodic point set is divided into finitely many
invariant pieces.
\end{remark}


\subsubsection{$n$--admissibility.}

The notion of admissibility of pairs is probably too restrictive for our goals. It is typically impracticable
to control all periodic orbits of a map. However, restrictions on $\Fix(f)$
or even on $\Per_n(f)$ for large $n$ are possible to handle with. Thus, some kind
of partial admissibility is needed.

Let $n$ be a positive integer, $U$ an open subset of $\R^d$ and
$f: \overline{U} \to \R^d$ a continuous map. The pair $(f, U)$ is said to be $n$--admissible if
$\Fix(f^k) \cap \overline{U} = \Fix(f^k) \cap U$ and it is compact for every $1 \le k \le n$.
Equivalently, $\Per_n(f) \cap \overline{U} = \Per_n(f) \cap U$ and it is compact.
Similarly, a homotopy $\{(h_t, U)\}_{t = 0}^1$ is $n$--admissible if the conditions for admissibility are
fulfilled for every $1 \le k \le n$ or, likewise, all pairs $(h_t, U)$ are $n$--admissible.
The set of $n$--admissible pairs is denoted $\mathcal{A}_n(U)$.

It is convenient to explicitly state one very trivial remark concerning the previous definition.
\begin{lemma}
A pair (resp. a homotopy) is admissible if and only if it is $n$--admissible for every $n \ge 1$.
\end{lemma}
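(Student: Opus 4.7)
The statement is essentially a tautological unfolding of the two definitions, so the plan is simply to write out both directions by comparing the quantifiers. I would treat the pair case and the homotopy case separately, but in parallel, since both reduce to the same logical observation: a universally quantified condition over all positive integers $k$ is equivalent to the same condition holding on each initial segment $\{1, \ldots, n\}$ for every $n \ge 1$.

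For the pair case, first I would note that admissibility asks the property $P(k): \Fix(f^k) \cap \overline{U} = \Fix(f^k) \cap U$ and it is compact, to hold for all $k \ge 1$, while $n$-admissibility asks for $P(k)$ for all $1 \le k \le n$. Then admissibility trivially implies $n$-admissibility for every $n$. Conversely, given any $k \ge 1$, choose $n = k$; then $n$-admissibility yields $P(k)$, so admissibility follows.

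For the homotopy case, the situation is analogous: the compactness condition on
\[
\bigcup_{t \in [0,1]} \{(t, x): x \in \Fix(h_t^k) \cap \overline{U}\} \subset [0,1] \times U
\]
must hold for all $k \ge 1$ in the admissible case and for $1 \le k \le n$ in the $n$-admissible case; the same argument as above then applies verbatim.

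The only subtlety worth mentioning, though not really an obstacle, is that the equivalent reformulation of $n$-admissibility in terms of $\Per_n(f)$ (already noted in the text) matches up correctly: the condition on $\Fix(f^k)$ for $1 \le k \le n$ is equivalent to the condition on $\Per_n(f) = \bigcup_{k=1}^{n} \Fix(f^k)$, and the union over all $n$ of the latter recovers $\Per(f) = \bigcup_{k \ge 1} \Fix(f^k)$; but since admissibility is stated in terms of each $\Fix(f^k)$ individually, not in terms of $\Per(f)$, no additional argument is needed. I expect the entire proof to fit in two or three lines.
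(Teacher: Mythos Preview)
Your proposal is correct and matches the paper's treatment: the paper introduces this lemma explicitly as a ``very trivial remark'' and gives no proof at all, which is exactly in line with your observation that the statement is a tautological unfolding of the quantifiers in the two definitions.
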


The axiomatic definition of dynamical zeta function can be extended to $n$--admissible pairs. However, the price to be paid
is that the formal power series is defined only up to the term $t^n$, that is, the
subgroup $1 + t^{n+1} \cdot \Z[[t]]$ must be quotiented out. More precisely,

$$\z_n: \mathcal{A}_n(U) \to (1 + t \cdot \Z [[t]]) \, / \, (1 + t^{n+1} \cdot \Z[[t]]).$$

The set of equivalence classes on the right inherits a group structure with multiplication.
The class to which $z(t)$ belongs is denoted $z(t) \mod t^{n+1}$.
\begin{lemma}
Two formal power series $z, z'$ are equal if and only if $z \mod t^n = z' \mod t^n$ for every $n \ge 1$.
\end{lemma}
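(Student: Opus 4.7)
The plan is to unwind the definition of the quotient group $(1 + t \cdot \Z[[t]])/(1 + t^n \cdot \Z[[t]])$ and reduce the statement to the fact that a formal power series is determined by its coefficients. The forward implication is immediate: if $z = z'$, then the two series determine the same coset in every quotient, so $z \mod t^n = z' \mod t^n$ for every $n \ge 1$.

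For the reverse implication, I would first observe that, for $z, z' \in 1 + t\cdot \Z[[t]]$, the equality $z \mod t^n = z' \mod t^n$ means $z \cdot (z')^{-1} \in 1 + t^n \cdot \Z[[t]]$, which, multiplying both sides by $z'$, is equivalent to saying that the coefficients of $z$ and $z'$ agree in all degrees $0, 1, \ldots, n-1$. Writing $z = \sum_{k \ge 0} a_k t^k$ and $z' = \sum_{k \ge 0} b_k t^k$, the hypothesis therefore yields $a_k = b_k$ for every $k$ with $k < n$, and since $n$ is arbitrary this gives $a_k = b_k$ for every $k \ge 0$, hence $z = z'$.

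There is no real obstacle here; the lemma is just a restatement of the standard fact that the Krull topology on $\Z[[t]]$ is Hausdorff, and the only point requiring care is to match the indexing convention introduced in the preceding paragraph so that $\mod t^n$ corresponds to agreement of coefficients strictly below degree $n$.
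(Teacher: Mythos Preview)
Your proof is correct. The paper does not actually prove this lemma; it is stated without proof and immediately followed by the remark that ``this simple fact will be constantly used through this work.'' Your argument supplies exactly the routine verification the authors omit, and your care about the indexing convention (that $\mod t^n$ means agreement of coefficients in degrees strictly less than $n$) is consistent with the paper's usage.
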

This simple fact will be constantly used through this work to obtain the zeta function of an admissible pair $(f, U)$
from the sequence of values $\z_n(f, U)$.

For any positive integers $k, m, n$ with $n \le mk$ there is a well--defined homomorphism
\begin{equation}\label{eq:zetatk}
  r_{m,n}^k: z(t) \mod t^{m+1} \mapsto z(t^k) \mod t^{n+1}
\end{equation}
between the groups $(1 + t \cdot \Z [[t]]) \, / \, (1 + t^{m+1} \cdot \Z[[t]])$ and
$(1 + t \cdot \Z [[t]]) \, / \, (1 + t^{n+1} \cdot \Z[[t]])$.

For the sake of completeness
we list the axioms which define the dynamical zeta function for $n$--admissible pairs.
\begin{itemize}
\item[(N)$_n$] \textbf{Normalization.} The value for the constant map $c_p : U \to \R^d$ which sends every point to $p \in U$ is
$$\z_n(c_p, U)(t) = 1 / (1 - t) \mod t^{n+1} = 1 + t + \ldots + t^n \mod t^{n+1}.$$
\item[(M)$_n$] \textbf{Multiplicativity.} Let $(f, U)$ be an $n$--admissible pair and $V, W$ disjoint
open subsets of $U$ such that $\Per_n(f) \cap U = \Per_n(f) \cap (V \cup W)$ and $\Per_n(f) \cap V, \Per_n(f)\cap W$
are invariant under $f$. Then,
$$\z_n(f, U) = \z_n(f_{|V}, V)\z_n(f_{|W}, W).$$
\item[(H)$_n$] \textbf{Homotopy invariance.} If $\{(h_t, U)\}_{t=0}^1$ is an $n$--admissible homotopy then
$$\z_n(h_0, U) = \z_n(h_1, U).$$
\item[(I)$_n$] \textbf{Iteration.} Let $(f, U)$ be an $n$--admissible pair and assume that $U$ is the disjoint union
of $k \ge 1$, $U = U_1 \cup \ldots \cup U_{k}$, such that $f(\Per_n(f) \cap U_i) \subset U_{i+1}$ (indices are taken mod $k$).
If $m = \lceil \frac{n}{k} \rceil$ then
$$\z_n(f, U)(t) = \z_m((f^k)_{|U_1}, U_1)(t^k),$$
\end{itemize}
where the term on the right has to be understood as $r_{m,n}^k(\z_m((f^k)_{|U_1}, U_1))$ using the notation in (\ref{eq:zetatk}).
Note from the assumptions that $((f^k)_{|U_1}, U_1)$ is automatically $m$--admissible.



\subsection{Computation for hyperbolic linear maps}\label{subsec:hyperbolic}

Let us now compute the zeta function of hyperbolic linear maps directly from the axioms. Since we have full control
of the periodic point set there is no need to work with partial admissibility.

\subsubsection{One--dimensional maps}\label{subsubsec:1d}
Let $f(x) = ax + b$, where $a \neq -1, 1$.
Assume first that $|a| < 1$. Then, the homotopy
$$h_t(x) = (1 - t) ax + b, \enskip \enskip 0 \le t \le 1,$$
is admissible and connects the map $f$ to the constant map $c_b$ so
$$\z(f, \R) \e{H} \z(c_b, \R) \e{N} 1/(1 - t).$$

Assume now that $a > 1$ and, after a change of coordinates, that $b = 0$.
Define $g : \R \to \R$ as $f$ in $(- \infty, 1]$ and as the constant map with value $a$ in $[1, +\infty)$. Then,
$$\z(g, \R) \e{L} \z(g, \R \setminus \{1\}) \e{M} 
\z(f, (-\infty, 1)) \z(c_a, (1, +\infty)) \e{L} \z(f, \R) \z(c_a, \R).$$
On the other hand, one can check that the homotopy $h_t(x) = g(x) - ta$ is admissible.
Note further that $h_1$ has no periodic points, hence we obtain
$$\z(g, \R) \e{H} \z(h_1, \R) \e{L} \z(h_1, \emptyset) = 1.$$
Combining the two expressions yields
$$\z(f, \R) = (\z(c_a, \R))^{-1} \e{N} 1 - t.$$

Last, we examine the case $a < -1$. Assume again $b = 0$ and define $g: \R \to \R$ by
$g(x) = -a$ if $x \le -1$, $g(x) = f(x)$ if $-1 \le x \le 1$ and $g(x) = a$ if $x \ge 1$. In terms of periodic points,
$g$ only has a fixed point at 0 and a 2--periodic orbit $\{a, -a\}$ outside $[-1,1]$. Thus,
\begin{equation}\label{eq:dim1}
\z(g, \R) \overset{(L)+(M)}{=} \z(g_{|V}, V) \z(f, (-1,1)),
\end{equation}
where $V = V^- \cup V^+ = (-\infty, -1) \cup (1, +\infty)$. Note that $g^2_{|V^+} : (1, +\infty) \to (1, +\infty)$
is equal to $c_{-a}$ restricted to $V^+$, so we have
$$\z(g_{|V}, V)(t) \e{I} \z(g^2_{V^+}, V^+)(t^2) = \z(c_{-a}, V^+)(t^2) \e{L} \z(c_{-a}, \R)(t^2) = 1/(1 - t^2).$$
Now, define the homotopy $h_t(x) = (1-t) g(x)$ and check that $\{(h_t, \R)\}_{t = 0}^1$ is admissible.
Indeed, $\Per(h_t) = \{0, (1-t)a, -(1-t)a\}$. Therefore, we obtain $\z(g, \R) \e{H} \z(c_0, \R) = 1/(1-t)$.
Plugging these results into (\ref{eq:dim1}) we obtain
$$\z(f, \R) \e{L} \z(f, (-1,1)) = \frac{1}{1-t} \left(\frac{1}{1-t^2}\right)^{-1} = 1 + t.$$

\subsubsection{Higher dimensions}\label{subsubsec:higherlinear}
There is not a significant gap between $\R$ and $\R^d$ for $d \ge 2$.
However, we will now deal with several eigenvalues and the computations become a bit trickier. A $d \times d$ real matrix $A$
is hyperbolic provided $\sigma(A) \cap S^1 = \emptyset$, $\sigma(A)$ denotes the set of eigenvalues of $A$
with multiplicities.
Actually, a weaker hypothesis than hyperbolicity is required, we simply assume that no eigenvalue of $A$
is a root of unity.
By abuse of notation, let us use $A : \R^d \to \R^d$ to refer to the linear
transformation defined by the matrix $A$. Notice that $\Per(A) = \{0\}$.

We will prove that the zeta function of $A$ only depends on the numbers
$$\sigma^- = \# \{\sigma(A) \cap (-\infty, -1)\} \mod 2, \enskip \enskip \sigma^+ = \# \{\sigma(A) \cap (1, +\infty)\} \mod 2.$$
Thus, only four different behaviors for the zeta function appear, three of which have already been found in dimension 1.

The first step is to prove that $\z(A, \R^d)$ only depends on the set of eigenvalues of $A$.
Henceforth we omit the domain from the notation of the zeta function as long as it is clear from the context.
Let $J_A = Q^{-1} A Q$ be the Jordan canonical form of $A$ and $P : [0,1] \to GL(d, \R)$ such that $P(0) = I$
and $P(1) = Q$. Then, $\{P(t)^{-1} A P(t)\}_{t=0}^1$ is an admissible homotopy from $A$ to $J_A$.
Denote $D_A$ the matrix obtained from $J_A$ after removing all of its 1's. It is composed of non--zero
$1 \times 1$ and $2 \times 2$ blocks on the diagonal which correspond to real and complex eigenvalues of $A$, respectively.
The convex combination  $(1 - t) J_A + t D_A$ defines an admissible homotopy from $J_A$ to $D_A$.
Therefore, after admissible homotopies we can suppose that $A$ has the form of $D_A$.

It is fairly straightforward to show by moving eigenvalues along the real line that we can assume that
any real eigenvalue of $A$ belongs to $\{-2, 0, 2\}$.
Suppose now that $\lambda_0$ is a non--real eigenvalue of $A$, then so is $\bar{\lambda}_0$.
There is a path $\lambda: [0,1] \to \mathds{C}$ connecting $\lambda(0) = \lambda_0$ to $\lambda(1) = 0$
that avoids points of the form $e^{2\pi i\alpha}$ for $\alpha \in \mathds{Q}$.
Likewise, the path $\bar{\lambda}$ connects $\bar{\lambda}_0$ to 0. Therefore, there is a path of
diagonal real matrices which defines an admissible homotopy that makes the pair of eigenvalues $\{\lambda_0, \bar{\lambda}_0\}$
become the pair $\{0, 0\}$. This trick can be used to cancel out every pair of non--real eigenvalues and,
furthermore, the same procedure works for every pair of equal real eigenvalues as well.
The final outcome is a matrix which has at most one eigenvalue in $(-\infty, -1)$, at most one eigenvalue
in $(1, +\infty)$ and no other non--zero eigenvalue.

Up to rearrangement of the columns, which can be simply done through an admissible homotopy for the basis,
for any fixed dimension $d$ only four different matrices appear and they
correspond with the four possibilities $(\sigma^-, \sigma^+) \in \{(0,0),(0,1),(1,0),(1,1)\}$.
Accordingly, denote the matrices $A_{(0,0)}, A_{(0,1)}, A_{(1,0)}, A_{(1, 1)}$.
Note that $A_{(0,0)}$ correspond to the constant linear map
so by (N) we have that $\z(A_{(0,0)}) = 1/(1 - t)$. The behavior of the next two matrices
$A_{(0,1)}, A_{(1,0)}$ is genuinely one--dimensional as they only have one non--zero eigenvalue.
Indeed, adding the constant map in the extra dimensions of the maps of homotopies defined in \ref{subsubsec:1d}
for the cases $a > 1$ and $a < -1$ suffices to compute the zeta function of $A_{(0,1)}$ and $A_{(1,0)}$, respectively.
It follows that $\z(A_{(0,1)}) = 1 - t$ and $\z(A_{(1,0)}) = 1 + t$.

It remains to address the case $(\sigma^-, \sigma^+) = (1,1)$, which corresponds to a true 2--dimensional setting.
The admissible homotopy suggested in Figure \ref{fig:homotopy} serves to this purpose.
Arrows are placed to describe the action of the map, bearing in mind that it also involves a reflection in the
horizontal axis.
We start with a diffeomorphism in $\R^2$ whose dynamics is described on the left of the figure.
It has only two periodic points, which are both fixed, and the dynamics
around them are, up to translation, equal to $A_{(1,0)}$ and $A_{(1,1)}$, respectively. The homotopy collapses both
fixed points into one (middle figure) and then removes it (right figure). After suitably applying (L), (M) and (H) we obtain
$$\z(A_{(1,0)}) \z(A_{(1,1)}) = 1, \enskip \Rightarrow \enskip \z(A_{(1,1)}) = 1/(1 + t).$$
Again, this computation can be extended to arbitrary $d > 2$ by putting the constant map in the extra dimensions.

\begin{figure}[htb]
\begin{center}
\includegraphics[scale = 0.7]{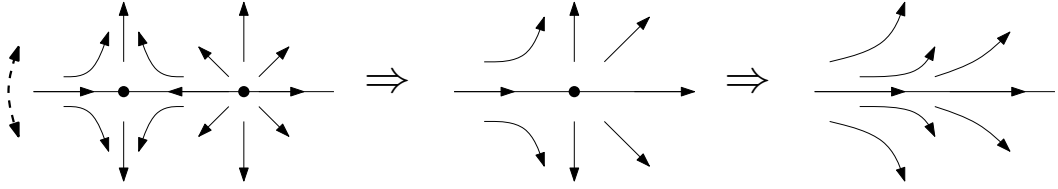}
\end{center}
\caption{Description of the admissible homotopy introduced to compute $\z(A_{(1,1)})$.}
\label{fig:homotopy}
\end{figure}

\subsubsection{$n$--admissibility and hyperbolicity}

The linear map $A$ is $n$--admissible iff $\Per_n(A) = \{0\}$ or, equivalently, if no root
of $A$, viewed as a matrix, is a root of unity of order no greater than $n$.
The homotopies used in \ref{subsubsec:higherlinear} are $n$--admissible as long as the original
map is $n$--admissible.
Consequently, the zeta function
for these linear maps depend on the spectrum of $A$ in the same fashion as in \ref{subsubsec:higherlinear}
with the exception that they are just defined up to the term $t^{n}$.


For example, consider the map $f(x) = -x + b$. Since $-1$ is a $2^{nd}$--root of unity, the zeta function of $f$
is defined only modulo $t^2$. It is easy to check that the first homotopy in \ref{subsubsec:1d}
is 1--admissible and we conclude by (H) that
$$\z_1(f, \R) = 1/(1 - t) \mod t^2 = 1+t \mod t^2.$$

\subsection{Linearization}

Our task is now to prove that the linear approximation is enough to compute the zeta function locally.

\begin{proposition}\label{prop:linearization}[Linearization]
Let $f : V \to \R^d$ be a $C^1$ map and $p$ be a fixed point of $f$. Assume that $V \cap \Per_n(f) = \{p\}$
and $Df_p$, the differential of $f$ at $p$, does not have eigenvalues of the form $e^{2\pi i \alpha}$
for $\alpha = r / q$ with $\gcd(r, q) = 1$ and $q \le n$. Then,
$$\z_n(f, V) = \z_n(Df_p, \R^d).$$
Consequently, if $V \cap \Per_n(f) = \{p\}$ for every $n \ge 1$ and $\sigma(Df_p)$ does not contain roots of unity then
$$\z(f, V) = \z(Df_p, \R^d).$$
\end{proposition}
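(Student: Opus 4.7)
The plan is to build an $n$-admissible homotopy from $f$ to its linearization $A := Df_p$ and then close with axioms (H) and (L). After translating so that $p = 0$, I would decompose $f(x) = Ax + g(x)$ with $g(0) = 0$ and $Dg_0 = 0$, and choose a small open ball $U$ around $0$ with $\overline U \subset V$. Because $\Per_n(f) \cap V = \{0\} \subset U$, axiom (L) immediately reduces the problem to comparing $\z_n(f, U)$ with $\z_n(A, \R^d)$.

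The homotopy I would use is the rescaled family
\[
\tilde h_s(x) = f(sx)/s \quad (s \in (0,1]), \qquad \tilde h_0(x) = Ax,
\]
defined on $\overline U$; since $U$ is a ball centered at $0$, the point $sx$ stays in $\overline U \subset V$ for every $s \in [0,1]$, so $\tilde h_s$ is well defined. The $C^1$ expansion $f(sx)/s = Ax + g(sx)/s$ together with $Dg_0 = 0$ gives a uniform estimate $|g(sx)/s| \le \epsilon(s) |x|$ with $\epsilon(s) \to 0$ as $s \to 0^+$, which makes $\tilde h_s$ continuous in $(s,x)$ on $[0,1] \times \overline U$. At the endpoints, $\tilde h_1 = f$ and $\tilde h_0 = A$.

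The decisive step is the $n$-admissibility of $\{(\tilde h_s, U)\}_{s \in [0,1]}$, and this is where both hypotheses enter in a clean way. For $s > 0$ the identity $\tilde h_s^k(x) = f^k(sx)/s$ shows that $x \in \Fix(\tilde h_s^k) \cap \overline U$ forces $sx \in \Fix(f^k) \cap \overline V = \{0\}$, hence $x = 0$, for every $1 \le k \le n$. For $s = 0$ the equation $A^k x = x$ has only the trivial solution, precisely because the eigenvalue hypothesis prevents $1$ from lying in $\sigma(A^k)$ for $k \le n$. Thus $\Fix(\tilde h_s^k) \cap \overline U = \{0\}$ uniformly in $s$ and in $1 \le k \le n$, which is compact and lies in $U$, so the homotopy is $n$-admissible. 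Axiom (H) then gives $\z_n(f, U) = \z_n(A, U)$, and a final application of (L) on the linear side (whose periodic set in $\R^d$ is $\{0\} \subset U$) upgrades this to $\z_n(A, \R^d)$, proving the first assertion. The second, unconditional equality follows immediately, since two formal power series agree iff they agree modulo $t^{n+1}$ for every $n$, a lemma already at hand.

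The step I expect to need the most care is the continuity of $\tilde h_s$ at $s = 0$: it is a routine consequence of $Dg_0 = 0$, but it is what licenses the rescaling trick and is the only place where the $C^1$ (rather than merely $C^0$) hypothesis on $f$ is genuinely used; a purely topological fixed point might need a different argument to realize the linearization as a homotopy endpoint.
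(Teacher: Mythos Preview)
Your argument is correct and reaches the same conclusion via the same axiomatic scaffolding (two applications of (L) sandwiching one application of (H)$_n$), but the homotopy you use is genuinely different from the paper's. The paper takes the straight--line homotopy $h_t(x) = (1-t)Ax + tf(x)$ and must then establish $n$--admissibility by a direct estimate: writing $h_t^k(x) = A^k x + \psi_k(t,x)$, it controls the remainder $\psi_k$ recursively and combines this with the lower bound $\|(I-A^k)x\| \ge \epsilon'\|x\|$ coming from the eigenvalue hypothesis. Your rescaling homotopy $\tilde h_s(x) = f(sx)/s$ avoids that bookkeeping entirely, because the conjugation identity $\tilde h_s^k(x) = f^k(sx)/s$ transports the periodic--point analysis for $s>0$ straight back to the hypothesis $\Per_n(f)\cap V = \{0\}$; the eigenvalue assumption is then invoked only at the single endpoint $s=0$. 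In exchange, you must verify continuity of the family at $s=0$, which is exactly the $o(\|x\|)$ estimate on the remainder that the paper also uses, so the $C^1$ hypothesis enters in the same place. One minor slip: you wrote $sx \in \Fix(f^k)\cap \overline V$, but the hypothesis only controls $\Per_n(f)\cap V$; this is harmless since you already arranged $\overline U \subset V$, so $sx \in V$.
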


\begin{proof}
For simplicity assume $p$ is the origin and denote $A = Df_p$.
We will prove that the segment homotopy between $f$ and $A$ is $n$--admissible in a neighborhood of $p$.
There is a map $\phi: V \to \R^d$ such that
$f(x) = Ax + \phi(x)$ for every $x \in V$ and $\lim_{x \to 0} \frac{\phi(x)}{||x||} = 0$.
Define $h_t(x) = Ax + t \phi(x) = (1-t)Ax + tf(x)$, for $t \in [0, 1]$.

By hypothesis, there exists $\epsilon > 0$ such that $||(I - A)x|| > \epsilon||x||$ for every $x$.
On the other hand, there exists $W$ a neighborhood of $p$ such that $||\phi(x)|| \le \frac{\epsilon}{2} ||x||$
for every $x \in W$. Thus, if $x \in W$ we obtain
$$||x - h_t(x)|| = ||(I - A)x + t \phi(x)|| \ge ||\epsilon x - t \frac{\epsilon}{2} x|| > \frac{\epsilon}{2} ||x||.$$
In particular, $p$ is the only fixed point of $h_t$ in $W$ so $\{(h_t, W)\}$ is 1--admissible.

Similar estimates for iterates of $f$ and $A$ are needed to prove the proposition.
Take $\epsilon' > 0$ so that $||(I - A^k)x|| > \epsilon'||x||$ for every $x$ and $1 \le k \le n$.
Define $\psi_1(t, x) = t \phi(x)$ and then recursively
$\psi_k(t, x) = A \psi_{k - 1}(t, x) + t \phi(h_t^{k-1}(x))$ up to $k = n$ so that they satisfy
$$h_t^k(x) = A^k x + \psi_k(t, x).$$
Again, we have that $\lim_{x \to 0} \frac{\psi_k(t, x)}{||x||} = 0$ uniformly in $t$. Thus,
there exists a neighborhood $W'$ of $p$ such that $||\psi_k(t, x)|| \le \frac{\epsilon'}{2} ||x||$ in $W'$
for every $t \in [0,1]$ and $1 \le k \le n$. Then,
$$||x - h_t^k(x)|| = ||(I - A^k)x + t \psi_k(t, x)|| \ge ||\epsilon' x - t \frac{\epsilon'}{2} x|| > \frac{\epsilon'}{2} ||x||.$$
This means that $\{(h_t, W')\}$ is an $n$--admissible homotopy and, consequently,
$$\z_n(f, V) \e{L} \z_n(f, W') \e{H} \z_n(A, W') \e{L} \z_n(Df_p, \R^d).$$
\end{proof}

\subsection{General computation and uniqueness.}\label{subsec:general}

Theorem \ref{thm:main} is a direct consequence of the following proposition which uses the language of partial admissibility.

\begin{proposition}\label{prop:uniquezn}
For any $n \ge 1$, there exist just one function
$$\z_n: \mathcal{A}_n(U) \to 1 + t \cdot \Z [[t]] \mod 1 + t^{n+1} \cdot \Z[[t]]$$
which satisfies (N)$_n$, (M)$_n$, (H)$_n$ and (I)$_n$.
\end{proposition}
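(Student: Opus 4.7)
Existence will come from the subsequent verification that the Lefschetz zeta function satisfies the axioms, so the content here is uniqueness: any two functions $\z_n, \z_n'$ obeying (N)$_n$, (M)$_n$, (H)$_n$, (I)$_n$ must agree on every $n$-admissible pair. The strategy is to use the axioms to reduce the computation of $\z_n(f,U)$ to a finite product of values on hyperbolic linear maps, which were completely determined from (N)$_n$--(I)$_n$ in Subsection \ref{subsec:hyperbolic}.

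Given $(f,U)\in\mathcal{A}_n(U)$, the set $\Per_n(f)\cap\overline U$ is compact and contained in $U$, so it admits a precompact open neighborhood $U'\Subset U$. Because $\inf\{\|f^k(x)-x\|:x\in\overline U\setminus U',\ 1\le k\le n\}>0$, a sufficiently small uniform perturbation keeps all periodic points of period $\le n$ inside $U'$; in particular, the straight-line homotopy to any sufficiently $C^0$-close map is $n$-admissible. Exploiting this elbow room, first replace $f$ by a $C^1$ approximation $\tilde f$, and then perturb $\tilde f$ in a Kupka--Smale fashion to a smooth map $g$ for which every periodic point of period $\le 2n$ is isolated --- so that, by compactness, there are finitely many orbits $\mathcal{O}_1,\dots,\mathcal{O}_m$ of periods $k_1,\dots,k_m\le n$ --- and the differential of the first-return map at each representative $p_{i,1}\in\mathcal{O}_i$ avoids every eigenvalue $e^{2\pi i r/q}$ with $\gcd(r,q)=1$ and $q\le\lceil n/k_i\rceil$. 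By (H)$_n$, $\z_n(f,U)=\z_n(g,U)$.

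Now pick disjoint open sets $V_i\supset\mathcal{O}_i$, each decomposed as $V_i=V_{i,1}\sqcup\cdots\sqcup V_{i,k_i}$ with $g(V_{i,j}\cap\Per_n(g))\subset V_{i,j+1}$ and $V_{i,j}\cap\Per_n(g)=\{p_{i,j}\}$, shrunk enough so that no other periodic point of $g$ of period $\le 2n$ meets $\overline{V_{i,1}}$. Localization (L) combined with (M)$_n$ yields
$$\z_n(g,U) \;=\; \prod_{i=1}^{m}\z_n(g_{|V_i},V_i),$$
axiom (I)$_n$ rewrites each factor as $\z_{m_i}((g^{k_i})_{|V_{i,1}},V_{i,1})(t^{k_i})$ with $m_i=\lceil n/k_i\rceil$, and Proposition \ref{prop:linearization} identifies the latter with $\z_{m_i}(Dg^{k_i}_{p_{i,1}},\R^d)$. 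Subsection \ref{subsec:hyperbolic} already expresses each such linear factor in closed form using only the four axioms (note $m_i k_i \le 2n-1$, so all inner iterates considered are covered by the generic approximation), so $\z_n(g,U)$, and hence $\z_n(f,U)$, is uniquely determined.

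The main obstacle is the generic approximation step: one must produce a $C^1$-small perturbation of $\tilde f$ that simultaneously makes $g^k-\id$ transverse to $0$ for every $1\le k\le 2n$ and nudges each first-return differential off a prescribed finite set of roots of unity on $S^1$, all the while staying inside the $C^0$-ball that keeps the straight-line homotopy $n$-admissible. This is a transversality argument in a finite-dimensional jet space that can be carried out iteratively on $k=1,\dots,2n$, with the allowed perturbation size controlled by the quantitative margin between $\Per_n(f)$ and $\partial U'$. Once granted, the remainder of the argument is routine axiom bookkeeping.
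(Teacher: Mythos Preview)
Your proposal is correct and follows essentially the same route as the paper: perturb $f$ through an $n$-admissible straight-line homotopy to a map $g$ whose low-period orbits are finite and hyperbolic, then apply (M)$_n$, (I)$_n$, and the Linearization Proposition to reduce to the linear computations of Subsection~\ref{subsec:hyperbolic}, deferring existence to the Lefschetz zeta function. The only notable technical difference is in how the genericity is packaged: the paper imposes the single condition $\det(I-Dg^{m})\neq 0$ at fixed points of $g^{m}$ with $m=(n!)^2$, which in one stroke forces both isolation and the absence of small-order roots of unity in every $Dg^{k}$ for $k\le n$, whereas you ask separately for transversality of $g^{k}-\id$ for $k\le 2n$ and an eigenvalue nudge; both achieve the same end.
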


Let $f: \overline{U} \to \R^d$ be $n$--admissible. The strategy to compute $\z_n(f, U)$ is to obtain $g$
connected to $f$ by an $n$--admissible homotopy. The periodic point set of $g$ will satisfy nice transversality properties
so that the work carried out in the previous subsections will be sufficient to compute $\z_n(g, U)$.

Denote $E$ the space of continuous maps from $\overline{U}$ to $\R^d$ with the $C^0$--topology.
Fix $V \subset U$ open neighborhood of $\Per_n(f)$.
\begin{lemma}\label{lem:transversal}\leavevmode
\begin{enumerate}
\item[$(i)$] There exists a convex neighborhood $\mathcal{K}_V$ of $f$ in $E$
such that $\Per_n(g) \subset V$ for every $g \in \mathcal{K}_V$.
\item[$(ii)$] For any $m \ge 1$, in any neighborhood of $f$ in $E$
there is a $C^0$--dense set $\mathcal{D}_{m, V} \subset E$ of maps such that
for any $g \in \mathcal{D}_{m, V}$ and for every $p \in \Fix(g^m)$, $g$ is $C^1$ at $p$ and
$$\det(I - Dg^m(x)) \neq 0.$$
\end{enumerate}
\end{lemma}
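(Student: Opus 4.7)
For part (i), the plan is a continuity--compactness argument. By $n$--admissibility, $P := \Per_n(f) \cap \overline U$ is compact and contained in $V$, and the continuous function $\Delta(x) := \min_{1 \le k \le n}\|x - f^k(x)\|$ vanishes precisely on $P$. Admissibility, which forbids periodic points from escaping toward $\partial U$ or to infinity, furnishes a compact neighborhood $K$ of $P$ with $K \subset V$ and a uniform bound $\Delta \ge \delta > 0$ on $\overline U \setminus K^\circ$. Taking $\mathcal{K}_V$ to be a sufficiently small open $C^0$--ball around $f$ (automatically convex), the uniform continuity of $f$ and its iterates on the relevant compact set ensures $\|x - g^k(x)\| > 0$ for every $g \in \mathcal{K}_V$, every $x \in \overline U \setminus V$ and every $1 \le k \le n$. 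In particular, $\Per_n(g) \subset V$.

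For part (ii), the plan combines smooth approximation with parametric transversality. Given a neighborhood $\mathcal{N}$ of $f$, pick $g_0 \in \mathcal{N} \cap \mathcal{K}_V$ and smooth $g_0$ on a slightly enlarged neighborhood of $V$ via convolution with a mollifier blended against a cutoff, producing $\tilde g \in \mathcal{N}$ that is $C^\infty$ near $V$ and still lies in $\mathcal{K}_V$. Introduce a finite--dimensional family $g_\sigma(x) := \tilde g(x) + \sum_{j=1}^N \sigma_j \chi_j(x)$, where the $\chi_j$ are $C^\infty$ vector--valued bumps supported in $V$, chosen rich enough that the map $\Phi : V \times \R^N \to \R^d$, $\Phi(x, \sigma) := g_\sigma^m(x) - x$, has surjective $\sigma$--derivative at every zero. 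Then $\Phi$ is transverse to $0$, and the parametric transversality theorem yields, for almost every (and in particular arbitrarily small) $\sigma$, a slice $x \mapsto \Phi(x, \sigma)$ that is transverse to $0$. Transversality at each zero $p$ is exactly the non--degeneracy $\det(I - Dg_\sigma^m(p)) \neq 0$, and since $g_\sigma$ is $C^1$ near $V$, it belongs to $\mathcal{D}_{m, V}$.

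The main obstacle is the construction of the perturbation family in (ii). A naive choice $g_\sigma = \tilde g + \sigma$ with $\sigma \in \R^d$ yields $\partial_\sigma \Phi$ equal to a fixed sum of products of $D\tilde g$ along the orbit, which can fail to be surjective (for instance when $D\tilde g \equiv -I$ along a period--$2$ orbit). The remedy is to use independent bumps localized near each candidate periodic orbit of $\tilde g^m$, providing orbit--by--orbit control of the $\sigma$--derivative. A secondary technical point in (i) is obtaining the uniform lower bound for $\Delta$ on the possibly non--compact set $\overline U \setminus K^\circ$; this is handled by the compactness of $\Per_n(f)$ built into admissibility, shrinking $K$ toward $P$ if necessary.
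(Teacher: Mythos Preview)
Your approach aligns with the paper's, which offers essentially no proof: it calls (i) ``straightforward'' and attributes (ii) to the Transversality Theorem, citing \cite{demelo, robinson}. Your sketch for (ii) is a correct unpacking of that reference.

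There is, however, a gap in your handling of (i) when $\overline U$ is non--compact. The claimed uniform lower bound $\Delta \ge \delta > 0$ on $\overline U \setminus K^\circ$ need not exist, and your proposed fix --- ``shrinking $K$ toward $P$'' --- goes the wrong way: it enlarges the complement rather than compactifying it. A concrete failure: take $U = \R$ and $f(x) = x + (1+x^2)^{-1}$; then $\Per_1(f) = \emptyset$, yet $\Delta(x) \to 0$ as $|x| \to \infty$, and the $C^0$--close perturbations $g = f - \epsilon$ acquire fixed points at $|x| \sim \epsilon^{-1/2}$, outside any bounded $V$. The paper glosses over this too; the clean repair is to invoke Localization (L) beforehand and replace $U$ by a relatively compact neighborhood of $\Per_n(f)$, after which your compactness argument is valid.
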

\begin{proof}
Item $(i)$ is straightforward and $(ii)$ is consequence of the Transversality Theorem
(see \cite{demelo, robinson}).
\end{proof}

Take $m = (n!)^2$ and a map $g \in \mathcal{K}_V \cap \mathcal{D}_{m, V}$.
By convexity $g$ is connected to $f$ via an $n$--admissible homotopy so $\z_n(f, U) = \z_n(g, U)$.

Note that $\Per_n(g) \subset \Fix(g^m)$ and if $x$ is a $k$--periodic point of $g$ for $1 \le k \le n$
the spectrum of $Dg^k(x)$ does not contain any root of unity of order smaller or equal to $n$.
In particular, periodic points of $g$ of period at most $n$ are isolated and they satisfy the hypothesis of
Proposition \ref{prop:linearization}.
Choose, for every periodic orbit $o \subset \Per_n(g)$,
a small neighborhood $U_o \subset U$ of the orbit such that $U_o \cap U_{o'} = \emptyset$ for $o \neq o'$
and $U_o$ is composed of as many components as the period of $o$.
Multiplicativity gives
$$\z_n(g, U) = \prod \z_n(g, U_o),$$
where the product runs over all periodic orbits in $\Per_n(g)$.
Using (I) and Proposition \ref{prop:linearization},
$$\z_n(g, U_o)(t) = \z_m(g^k, U^0_o)(t^k) = \z_m(Dg^k, \R^d)(t^k),$$
where $k$ is the period of $o$, $m = \lceil \frac{n}{k} \rceil$, $U^0_o$ is one connected component of $U_o$
and equalities are understood as in axiom (I)$_n$.
The computation of $\z_n(g, U)$ and hence of $\z_n(f, U)$ is concluded using the results of Subsection \ref{subsec:hyperbolic}.

The choice of $g$ is irrelevant in the previous computation provided the ascribed values $\z_n(f, U)$ satisfy (H)$_n$.
Indeed, assume $g_0, g_1$ are two $C^1$ maps whose periodic points of period at most $n$ are isolated and satisfy
the hypothesis of the Linearization Proposition \ref{prop:linearization} so that it is easy to compute its zeta function
as was previously done. Additionally, suppose they are connected
to $f$ via $n$--admissible homotopies. If we concatenate the two homotopies we obtain an $n$--admissible
homotopy from $g_0$ to $g_1$. Axiom (H)$_n$ ensures $\z_n(g_0, U) = \z_n(g_1, U)$ and
uniqueness in Proposition \ref{prop:uniquezn} follows.

In Section \ref{sec:maszeta} it is proved that Lefschetz zeta function is a dynamical zeta function and, in particular,
it yields the required function $\z_n$ when truncating the series to order $n$.
This concludes existence and completes the proof of Proposition \ref{prop:uniquezn} and thus of Theorem \ref{thm:main}.

The previous proof involves an indirect argument which uses the known existence of a dynamical zeta function, Lefschetz zeta function.
Nonetheless, a direct argument concluding the existence of $\z_n$ from previous considerations is feasible
but beyond the scope of this article. A sketch of it would be as follows:

\begin{itemize}
\item Modify Lemma \ref{lem:transversal} to work with smooth $g_0$ and $g_1$.
\item Prove that the homotopy $\{g_t\}$ between $g_0$ and $g_1$ can be chosen smooth and satisfying
strong transversality conditions which imply that periodic points (up to period $n$) continuate except in the event of
 specific bifurcations. These bifurcations consist of two periodic points of the same period collapsing and then disappearing and take place in a 1--dimensional submanifold.
\item Check that the points occuring in the bifurcation are of types $(0,0)$ and $(0,1)$ or $(1,0)$ and $(1,1)$
 (see \ref{subsubsec:higherlinear}) so that there is no change in the zeta function.
Conclude that the approximation scheme defines $\z_n(f, U)$ unambiguously.
\end{itemize}

The second point seems the most delicate one as it would need jet transversality (cf. \cite[Proposition 9.34]{demelo})
and arguments close the ones in the proof of Kupka--Smale Theorem.
Check \cite{demelo, robinson} and references therein for more information.

\subsection{Extra dimensions}


In the case the map is the product of two maps $f \times g$ and the second factor is just a constant map
the zeta function only depends on $\z(f)$. Indeed, for hyperbolic linear maps
adding a trivial factor makes no impact in the zeta function because all new eigenvalues are 0.
Since the computation of the zeta function ultimately boils down to that of hyperbolic maps we conclude
that this observation is true in full generality.

\begin{proposition}\label{prop:extradim}
Let $(f, U)$ be an $n$--admissible pair and $U'$ an open subset of $\R^{d'}$ which contains 0. Define
$\bar{f} : U \times U' \to \R^d \times \R^{d'}$ by $\bar{f}(x, y) = (f(x), 0)$. Then,
$(\bar{f}, U \times U')$ is $n$--admissible and
$$\z_n(\bar{f}, U \times U') = \z_n(f, U).$$
Consequently, if $(f, U)$ is admissible so is $(\bar{f}, U \times U')$ and $\z(\bar{f}, U \times U') = \z(f, U)$.
\end{proposition}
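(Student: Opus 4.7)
The plan is to follow the same reduction strategy as in the proof of Proposition \ref{prop:uniquezn}. First I would check that $(\bar{f}, U \times U')$ is indeed $n$-admissible: since $\bar{f}^k(x,y) = (f^k(x), 0)$ for every $k \ge 1$ and $0 \in U'$, one has $\Fix(\bar{f}^k) \cap \overline{U \times U'} = (\Fix(f^k) \cap U) \times \{0\}$, which is compact and contained in $U \times U'$ for all $k \le n$. The same fiberwise computation shows that if $\{(h_t, U)\}_{t=0}^1$ is an $n$-admissible homotopy, then so is $\{(\bar{h}_t, U \times U')\}_{t=0}^1$ where $\bar{h}_t(x,y) = (h_t(x), 0)$.

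Next, I would approximate $f$ by a smooth map $g \in \mathcal{K}_V \cap \mathcal{D}_{m, V}$ with $m = (n!)^2$ via the convex $n$-admissible homotopy $(1-t)f + tg$. Lifting this homotopy and applying (H)$_n$ reduces the problem to proving $\z_n(\bar{g}, U \times U') = \z_n(g, U)$. Every periodic orbit $o$ of $g$ of period $k \le n$ corresponds to a unique periodic orbit $\bar{o} = o \times \{0\}$ of $\bar{g}$ of the same period, and these exhaust $\Per_n(\bar{g})$; hence axioms (L), (M)$_n$ and (I)$_n$ allow one to factor both $\z_n(g, U)$ and $\z_n(\bar{g}, U \times U')$ as products indexed by the orbits of $g$, choosing neighbourhoods of the form $U_o$ and $U_o \times V'$ for a small convex neighbourhood $V'$ of $0$ in $U'$.

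For each such orbit through $p$, the differential has the block form
\[
D\bar{g}^k_{(p,0)} = \begin{pmatrix} Dg^k_p & 0 \\ 0 & 0 \end{pmatrix},
\]
so its spectrum is the spectrum of $Dg^k_p$ augmented with $d'$ zeros. In particular $\det(I - D\bar{g}^k_{(p,0)}) = \det(I - Dg^k_p) \neq 0$, and no eigenvalue is a root of unity of order $\le \lceil n/k \rceil$, so Proposition \ref{prop:linearization} applies at $(p,0)$ just as it does at $p$. The comparison then reduces to the linear equality $\z_{\lceil n/k \rceil}(D\bar{g}^k_{(p,0)}, \R^{d+d'}) = \z_{\lceil n/k \rceil}(Dg^k_p, \R^d)$, which follows from Subsection \ref{subsubsec:higherlinear}: the zeta function of a hyperbolic linear map depends only on the parities $(\sigma^-, \sigma^+)$ of the numbers of eigenvalues in $(-\infty,-1)$ and $(1,+\infty)$, and appending zero eigenvalues affects neither count.

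The main obstacle I anticipate is bookkeeping rather than conceptual: one must verify that the admissible homotopies used in \ref{subsubsec:higherlinear} to bring an arbitrary hyperbolic matrix to one of the normal forms $A_{(\sigma^-, \sigma^+)}$ can be performed while keeping the trivial $0$-block on the extra $\R^{d'}$ factor untouched. Since those homotopies act only on the non-trivial Jordan blocks and a $0$ eigenvalue never creates a root of unity, the hyperbolic classification lifts verbatim, and the desired equality follows.
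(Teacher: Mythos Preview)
Your proposal is correct and follows essentially the same route as the paper: reduce via the approximation scheme of Subsection~\ref{subsec:general} to finitely many hyperbolic periodic orbits, apply Linearization, and observe that appending a zero block to the differential leaves $(\sigma^-,\sigma^+)$ unchanged, so the classification of Subsection~\ref{subsubsec:higherlinear} gives the same value. The paper compresses this into a single paragraph, but your more detailed bookkeeping (in particular the explicit check of $n$--admissibility and the lifting of homotopies) is exactly what that paragraph is implicitly invoking.
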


\subsection{Generalized homotopy}

\begin{proposition}
Let $V$ be an open subset of $[0,1] \times U$ and
$\{(h_t, V_t)\}_{t = 0}^1$ a continuous family of maps, $V_t = V \cap (\{t\} \times U)$, such that
$(h_t, V_t)$ is $n$--admissible for every $0 \le t \le 1$. Then,
$$\z_n(h_0, V_0) = \z_n(h_1, V_1).$$
In this case, we say that $\{(h_t, V_t)\}_{t = 0}^1$ is a generalized $n$--admissible homotopy.
An analogous statement holds for (total) admissibility.
\end{proposition}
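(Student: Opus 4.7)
The plan is to reduce the varying-domain homotopy to the fixed-domain axiom (H)$_n$ by a local patching argument: around each $t^* \in [0,1]$, construct a tube $J \times W \subset V$ in which every periodic point of $(h_t, V_t)$ lies for $t \in J$, and then apply (L) together with (H)$_n$ on the tube to conclude that $t \mapsto \z_n(h_t, V_t)$ is locally constant. Compactness of $[0,1]$ and connectedness then finish the job.

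To construct the tube at $t^*$: the compact set $K^* = \Per_n(h_{t^*}) \cap V_{t^*}$ provided by pointwise $n$-admissibility admits an open neighborhood $W$ with $\overline{W}$ compact and contained in $V_{t^*}$. By openness of $V$ in $[0,1]\times U$ and the tube lemma applied to the compact set $\{t^*\}\times\overline{W}$, there is an open $J \subset [0,1]$ containing $t^*$ with $J\times\overline{W}\subset V$, so each $h_t$ is well-defined on $\overline{W}$. The map $(t,x)\mapsto h_t^k(x)-x$ is continuous on $J\times\overline{W}$ for $1\le k\le n$ and nonvanishing on $\{t^*\}\times\partial W$ (since by $n$-admissibility, $\Fix(h_{t^*}^k)\cap V_{t^*}\subset K^*\subset W$); compactness of $\partial W$ lets us shrink $J$ so that it remains nonvanishing on $J\times\partial W$. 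This makes $\{(h_t, W)\}_{t\in J}$ a standard $n$-admissible homotopy whose periodic bundle is closed in the compact set $J\times\overline{W}$.

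It remains to check, after perhaps shrinking $J$ further, that $\Per_n(h_t)\cap V_t\subset W$ for every $t\in J$, so that (L) gives $\z_n(h_t, V_t)=\z_n(h_t, W)$. Argue by contradiction: otherwise there exist $t_j\to t^*$ and $x_j\in\Per_n(h_{t_j})\cap(V_{t_j}\setminus W)$. The global compactness implicit in the notion of generalized $n$-admissible homotopy (the natural analogue of the compactness condition in the fixed-domain admissibility definition) produces an accumulation point $(t^*, x^*)\in[0,1]\times U$ with $x^*\in\overline{V_{t^*}}$ and $h_{t^*}^k(x^*)=x^*$ for some $1\le k\le n$. By the $n$-admissibility of $(h_{t^*}, V_{t^*})$, $x^*\in K^*\subset W$; but $x_j\in U\setminus W$ (a closed set) forces $x^*\notin W$, a contradiction.

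With the tubes in place, (H)$_n$ applied to the fixed-domain homotopy $\{(h_t, W)\}_{t\in J}$ gives $\z_n(h_a, W)=\z_n(h_b, W)$ for $a,b\in J$, and together with the localization this makes $t\mapsto\z_n(h_t, V_t)$ locally constant on $[0,1]$; extracting a finite subcover of $[0,1]$ from $\{J_{t^*}\}$ and chaining equalities across the overlaps then yields $\z_n(h_0, V_0)=\z_n(h_1, V_1)$. The same scheme handles the totally admissible case with $\z$ in place of $\z_n$, as no use is made of the bound on $k$. The main technical hurdle is the upper semicontinuity of the periodic bundle in the contradiction step: absent enough compactness in the hypotheses, periodic points could in principle drift in from far outside $W$ as $t\to t^*$, and the generalized homotopy must be taken $n$-admissible in a strong enough sense (pointwise admissibility plus compactness of the bundle in $[0,1]\times U$, matching the fixed-domain convention) to rule this out.
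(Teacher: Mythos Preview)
Your proposal is correct and follows essentially the same route as the paper: prove that $t \mapsto \z_n(h_t, V_t)$ is locally constant by building, around each $t_0$, a tube $J \times W \subset V$ containing all the relevant periodic points, and then invoke localization (L) together with the fixed-domain axiom (H)$_n$. The paper's proof is terser and simply asserts the containment $\Per_n(h_t) \subset W$ for $t$ near $t_0$ without argument; you supply a contradiction argument for this step and correctly flag that it rests on a global compactness of the periodic bundle which the paper leaves implicit.
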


\begin{proof}
By connectedness, we only need to show that $\z_n(h_t, V_t)$ is locally constant. Fix $t_0 \in [0, 1]$
and a take a compact set $\{t_0\} \times K \subset V_{t_0}$ containing $\Per_n(h_{t_0})$ in its interior.
If $\epsilon > 0$ is small enough, $[t_0 - \epsilon, t_0 + \epsilon] \times K \subset V$ and,
furthermore, $[t_0 - \epsilon, t_0 + \epsilon] \times W \subset V$ for an open neighborhood $W$ of $K$ as well.
Additionally, we can assume that $\Per_n(h_t) \subset \{t\} \times W$ for every $|t - t_0| \le \epsilon$.
Using localization and homotopy invariance properties we obtain
$$\z_n(h_{t_0-\epsilon}, V_{t_0-\epsilon}) = \z_n(h_{t_0-\epsilon}, \{t_0 - \epsilon\} \times W) =
\z_n(h_{t_0+\epsilon}, \{t_0 + \epsilon\} \times W) = \z_n(h_{t_0+\epsilon}, V_{t_0+\epsilon}).$$
\end{proof}

\section{Around the zeta function}\label{sec:maszeta}

\subsection{Characterization through iterates}

\begin{proposition}\label{prop:iteratescharacterize}
The sequence $(\z_1(f^k, U))_{k \ge 1}$ completely determines $\z(f, U)$ and vice\-versa.
\end{proposition}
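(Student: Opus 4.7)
The strategy is to establish the identity
\[
\z(f, U)(t) = \exp\!\Bigl(\sum_{k \ge 1} \frac{i_k}{k}\, t^k\Bigr),
\]
where $i_k \in \Z$ is defined by $\z_1(f^k, U) = 1 + i_k t \mod t^2$. Once this is in place the proposition follows at once, since $\log$ and $\exp$ are mutually inverse bijections between $1 + t\Q[[t]]$ and $t\Q[[t]]$: the logarithm of $\z(f, U)$ exposes the integers $i_k$ and hence each $\z_1(f^k, U)$, while the exponential of the right hand side reconstructs $\z(f, U)$ from the sequence $(i_k)_k$.

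I prove the identity modulo $t^{n+1}$ for arbitrary $n \ge 1$. Following the reduction of Subsection \ref{subsec:general}, connect $f$ via an $n$-admissible homotopy to a $C^1$ map $g$ whose periodic points of period at most $n$ are isolated and hyperbolic, in the sense required by Proposition \ref{prop:linearization}. The left side is invariant under this homotopy by (H)$_n$; for the right side, each family $\{h_t^k\}$ with $1 \le k \le n$ inherits $1$-admissibility from the $n$-admissibility of $\{h_t\}$, so (H)$_1$ preserves each $i_k$ with $k \le n$. Next, (M), Localization (L), and the analogous additivity of $i_k$ (via (M)$_1$ applied to $g^k$, noting that $f$-invariance implies $g^k$-invariance) factor both sides as products over the finitely many periodic orbits $o$ of $g$ of period $k(o) \le n$, supported on disjoint tubular neighborhoods $U_o$ that are unions of $k(o)$ connected components.

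For a single orbit $o$, axiom (I) combined with Proposition \ref{prop:linearization} gives
\[
\z_n(g, U_o)(t) \equiv \z_{\lceil n/k(o)\rceil}(A_o, \R^d)(t^{k(o)}) \mod t^{n+1},
\]
where $A_o$ denotes the differential of $g^{k(o)}$ at any point of $o$. On the right side, $g^k$ has fixed points in $U_o$ only when $k(o) \mid k$; in that case (M)$_1$ splits $\z_1(g^k, U_o)$ into $k(o)$ factors, each coinciding with $\z_1(A_o^{k/k(o)}, \R^d)$ by Proposition \ref{prop:linearization}, since the chain rule identifies $Dg^k$ at the $i$-th point of the orbit as conjugate to $A_o^{k/k(o)}$. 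Consequently
\[
i_k(g, U_o) = \begin{cases} k(o)\cdot i_{k/k(o)}(A_o), & k(o) \mid k, \\ 0, & k(o) \nmid k, \end{cases}
\]
and substituting $s = t^{k(o)}$ reduces the per-orbit identity to the hyperbolic linear statement
\[
\z(A_o, \R^d)(s) = \exp\!\Bigl(\sum_{j \ge 1} \frac{i_j(A_o)}{j}\, s^j\Bigr).
\]

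This linear identity is verified case by case using Subsection \ref{subsec:hyperbolic}: for each parity class $(\sigma^-, \sigma^+)$, $\z(A)$ is one of $(1-t)^{-1}, 1-t, 1+t, (1+t)^{-1}$, while the class of $A^j$ is determined by how negative eigenvalues flip sign under the $j$-th power (eigenvalues greater than $1$ always stay there). A short Taylor expansion of $\log$ in each of the four cases confirms the matching integer coefficients $i_j(A) \in \{\pm 1\}$. Multiplying the per-orbit identities and letting $n \to \infty$ completes the argument. The main technical obstacle is the per-orbit bookkeeping: ensuring that the $k(o)$ factors produced by (M)$_1$ on each orbit are genuinely equal (which ultimately rests on the chain-rule conjugacy of the $Dg^k$'s along the orbit) and matching the exponent $k(o)$ of $t$ coming from iteration with the multiplicity $k(o)$ of indices coming from multiplicativity.
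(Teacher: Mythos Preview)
Your proof is correct and relies on the same reduction (Subsection~\ref{subsec:general}, Proposition~\ref{prop:linearization}, and the per--orbit splitting via (M) and (I)) as the paper, but packages the conclusion differently. The paper organizes the hyperbolic data into the product decomposition $\z(f) = \prod_{l \ge 1}(1-t^l)^{-e_l}$, computes $\z_1(f^k) = 1 + (\sum_{l \mid k} l e_l)\, t$, and appeals to M\"obius inversion to pass between the sequences $(e_l)$ and $(\z_1(f^k))$. You instead prove the single identity $\z(f,U) = \exp\bigl(\sum_k i_k t^k/k\bigr)$ directly, which is the logarithmic reformulation of the same relation. What your route buys is a self--contained, axiom--only derivation of the exponential formula: combined with the identification $z_1(f,U) = i(f,U)$ of Subsection~\ref{subsec:z1istheindex}, your argument yields Theorem~\ref{prop:lefschetz} without first exhibiting the Lefschetz zeta function as a candidate and invoking uniqueness. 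The paper's route, on the other hand, makes the M\"obius structure and the factorization of Lemma~\ref{lem:descomposicion} explicit, which is what is used later for Dold's congruences. One small point worth tightening in your write--up: when you say that $\{h_t^k\}$ inherits $1$--admissibility, note that the domain of $h_t^k$ may vary with $t$; a common domain on which (H)$_1$ applies can be extracted from the compactness of $\bigcup_t \Per_n(h_t)$ in $U$, or one may invoke the generalized homotopy proposition.
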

\begin{proof}
Assume first that $U$ is a small neighborhood of a hyperbolic fixed point $p$.
By Linearization Proposition \ref{prop:linearization} we can assume that $f$ is a linear map.
In \ref{subsubsec:higherlinear} we showed that there are merely four cases to consider, associated to the possible values
of the pair $(\sigma^-, \sigma^+)$.
Recall that $\sigma^{-}(\sigma^+)$ is the parity of the number of real eigenvalues of $Df_p$ smaller than -1
(greater than 1).
The results are displayed in Table (\ref{eq:table1}) below.

\begin{equation}\label{eq:table1}
\begin{array}{|c|cl|c|c|c|}
\hline
(\sigma^-, \sigma^+) & \z(f)     &													& \z_1(f)        & \z(f^2) 		  & \z_1(f^2)\\
\hline
(0,0) 							 & 1/(1 - t) & = (1 - t)^{-1} 					& 1 + t \mod t^2 & (1 - t)^{-1} & 1 + t \mod t^2\\
(0,1)							   & 1 - t		 & = (1 - t)								& 1 - t \mod t^2 & (1 - t)			& 1 - t \mod t^2\\
(1,0)								 & 1 + t		 & = (1 - t)^{-1} (1 - t^2) & 1 + t \mod t^2 & (1 - t)			& 1 - t \mod t^2\\
(1,1)								 & 1/(1 + t) & = (1 - t) (1 - t^2)^{-1} & 1 - t \mod t^2 & (1 - t)^{-1} & 1 + t \mod t^2\\
\hline
\end{array}
\end{equation}

In this case, the zeta function of the iterates of the map satisfy $\z(f^k) = \z(f)$ if $k$ is odd and
$\z(f^k) = \z(f^2)$ if $k$ is even. Note that the four possible values of $\z(f)$ are in one--to--one
correspondence with the four values of the pair $(\z_1(f), \z_1(f^2))$.

Next, assume that $\Per(f) = \Fix(f)$ is finite and every fixed point is hyperbolic.
Then, the zeta function of $f$ can be computed using multiplicativity.
Denote $a_{(0,0)},a_{(0,1)},a_{(1,0)},a_{(1,1)}$ the amount of fixed points of each of
the four types determined by $(\sigma^-,\sigma^+)$.
Then, Table (\ref{eq:table1}) yields
$$\z(f) = (1 - t)^{-a_1} (1 - t^2)^{-b_1}, \enskip \z(f^2) = (1 - t)^{-a_1 -2b_1}$$
where $a_1 = a_{(0,0)} - a_{(0,1)} + a_{(1,0)} - a_{(1,1)}$ and $b_1 = - a_{(1,0)} + a_{(1,1)}$.
Again, $\z(f^k) = \z(f)$ if $k$ is odd and $\z(f^k) = \z(f^2)$ if $k$ is even.
Identical results hold if we impose $\Per_n(f) = \Fix(f)$ and replace $\z$ by $\z_n$.

An additional bit of attention is required to analyze the case of hyperbolic periodic orbits
of period $m > 1$. Define the numbers $a^m_{(0,0)},a^m_{(0,1)},a^m_{(1,0)},a^m_{(1,1)}$ as the
amount of $m$--periodic orbits for which the pair $(\sigma^-,\sigma^+)$ is associated to $Df^m_p$, $p$ being any point in the orbit.
If $a_m = a^m_{(0,0)} - a^m_{(0,1)} + a^m_{(1,0)} - a^m_{(1,1)}$ and $b_m = - a^m_{(1,0)} + a^m_{(1,1)}$
and $d = \gcd(k, m)$ then
\begin{equation}\label{eq:zetaperiodic}
\z(f^k) =
\begin{cases}
(1 - t^{m/d})^{-da_m} (1 - t^{2m/d})^{-db_m} & \text{if }k/d \text{ is odd,} \\
(1 - t^{m/d})^{-d(a_m+2b_m)} & \text{if }k/d \text{ is even.} \\
\end{cases}
\end{equation}
Consequently, the linear term of $\z(f^k)$ vanishes excepts for the case $d = m$, that is $m | k$, so
\begin{equation}\label{eq:1zetaperiodic}
\z_1(f^k) =
\begin{cases}
1 + da_mt \mod t^2 & \text{if }m | k \text{ and } k/m \text{ is odd,} \\
1 + d(a_m+2b_m)t \mod t^2 & \text{if }m | k \text{ and } k/m \text{ is even,} \\
1 \mod t^2 & \text{otherwise.} \\
\end{cases}
\end{equation}
In particular, $\z(f) = (1 - t^{m})^{-da_m} (1 - t^{2m})^{-db_m}$ and the zeta function is determined
by the integers $a_m, b_m$, which in turn can be obtained from $\z_1(f^m), \z_1(f^{2m})$.

In the general case, recall Subsection \ref{subsec:general},
the task is reduced to examine the case in which $f$ has finitely many hyperbolic
periodic orbits of period $m$, for each $m \ge 1$. Multiplicativity yields that
$\z(f)$ is a product of factors described in (\ref{eq:zetaperiodic}),
$$\z(f) = \prod_{m \ge 1} (1 - t^{m})^{-a_m} (1 - t^{2m})^{-b_m}.$$
Thus, the exponent of each factor $(1 - t^l)$ is $-a_l$ if $l$ is odd and $-(a_l + 2b_{l/2})$ if $l$ is even.
If we denote this exponent by $e_l$, we observe that the sequence $(e_l)_{l \ge 1}$ uniquely determines $\z(f)$
and viceversa.

Similarly, $\z_1(f^k)$ is a product of factors described in (\ref{eq:1zetaperiodic}).
Note that in a product of elements of $(1 + t\cdot\Z[[t]]) / (1 + t^2 \cdot \Z[[t]])$
the linear coefficient is the sum of the linear coefficients of each of the factors.
Therefore, we obtain that
$$\z_1(f^k) = 1 + \left(\sum_{m | k} m a_m + \sum_{2m | k} 2m b_m \right) t = 1 + \left(\sum_{l | k} l e_l \right) t.$$
Consequently, the sequence $(\z_1(f^k))_{k \ge 1}$ can be obtained from $(e_l)_{l \ge 1}$
and viceversa using M\"{o}bius inversion formula. This completes the proof.
\end{proof}
\begin{remark}\label{rmk:iterates}
The previous proof shows that there is a correspondence between $(\z_1(f^k, U))^n_{k = 1}$ and $\z_n(f, U)$
via the first $n$ terms of $(e_l)_{l \ge 1}$.
\end{remark}

\subsection{Commutativity}

\begin{proposition}\label{prop:commutativity}
Let $U \subset \R^d, U' \subset \R^{d'}$ be open sets and $f: U \to \R^{d'}, g: U' \to \R^d$ be continuous maps.
Then, the composite maps
$$gf: V = U \cap f^{-1}(U') \to \R^d, \enskip \enskip fg: V' = U' \cap g^{-1}(U) \to \R^{d'}$$
have homeomorphic periodic point sets of any period. Therefore, $(gf, V)$ is $n$--admissible
iff $(fg, V')$ is $n$--admissible. Furthermore, if both are $n$--admissible then
$$\z_n(gf, V) = \z_n(fg, V').$$
Consequently, if both pairs are admissible then $\z(gf, V) = \z(fg, V')$.
\end{proposition}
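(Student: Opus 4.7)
The plan is to treat the two assertions separately. For the homeomorphism of periodic point sets (which immediately yields the admissibility equivalence), I would define the map $\phi_k : \Fix((gf)^k) \cap V \to \Fix((fg)^k) \cap V'$ by $\phi_k(x) = f(x)$. The identity $(fg)^k \circ f = f \circ (gf)^k$ places $\phi_k(x)$ inside $\Fix((fg)^k)$; moreover $f(x) \in U'$ by definition of $V$, and $g(f(x)) = gf(x)$ lies in $\Fix((gf)^k) \cap V \subset U$ because $\Fix((gf)^k)$ is $gf$-invariant, so $\phi_k(x) \in V'$. The analogue $\psi_k(y) = g(y)$ is defined symmetrically, and the compositions $\psi_k \circ \phi_k = gf|_{\Fix((gf)^k)\cap V}$ and $\phi_k \circ \psi_k = fg|_{\Fix((fg)^k)\cap V'}$ are bijections of order dividing $k$. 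Hence $\phi_k$ is a continuous bijection with continuous inverse $(gf)^{k-1}\circ g$, i.e., a homeomorphism; compactness and closure conditions transfer between the two sides, and $(gf,V)$ is $n$-admissible iff $(fg,V')$ is.

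For the zeta function equality I would invoke Proposition \ref{prop:iteratescharacterize} together with Remark \ref{rmk:iterates}: it suffices to show $\z_1((gf)^k, V) = \z_1((fg)^k, V')$ for every $k \ge 1$ (only $1 \le k \le n$ in the partial case). Following the approximation scheme of Subsection \ref{subsec:general}, I would approximate $f$ and $g$ jointly by $C^1$ maps $\tilde{f}, \tilde{g}$ that are $C^0$-close to $f,g$ and such that every fixed point of $(\tilde{g}\tilde{f})^k$ in $V$ is hyperbolic for $1 \le k \le n$. Because the perturbation is small, the straight-line homotopies $(1-s)gf + s\tilde{g}\tilde{f}$ in $V$ and $(1-s)fg + s\tilde{f}\tilde{g}$ in $V'$ are both $n$-admissible, so by axiom (H) I may assume that $f,g$ themselves are $C^1$ and that $(gf)^k$ has only hyperbolic fixed points. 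Writing $x_i := (gf)^i(x)$ and $y_i := f(x_i)$ at such a fixed point $x$, the derivatives read
$$D(gf)^k_x = Dg_{y_{k-1}} Df_{x_{k-1}} \cdots Dg_{y_0} Df_{x_0}, \qquad D(fg)^k_{f(x)} = Df_{x_0} Dg_{y_{k-1}} \cdots Df_{x_1} Dg_{y_0},$$
which are cyclic rearrangements of the same $2k$ factors. Iterating the identity $\sigma(AB)\setminus\{0\} = \sigma(BA)\setminus\{0\}$ forces both matrices to share non-zero spectrum, so the invariants $(\sigma^-,\sigma^+)$ from Subsection \ref{subsubsec:higherlinear} agree; the Linearization Proposition \ref{prop:linearization} and multiplicativity then give $\z_1((gf)^k,V) = \z_1((fg)^k, V')$, as required.

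The main obstacle is organizing the joint perturbation so that both $(\tilde{g}\tilde{f})^k$ and $(\tilde{f}\tilde{g})^k$ satisfy the required transversality while keeping both homotopies $n$-admissible. Fortunately, the spectrum coincidence above shows that hyperbolicity at corresponding periodic orbits is equivalent on the two sides, so one only needs to apply Lemma \ref{lem:transversal} on the $\tilde{g}\tilde{f}$ side; admissibility of the homotopies then reduces to the standard compactness-of-$\Per_n$ argument already used repeatedly in the paper.
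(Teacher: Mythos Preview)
Your treatment of the homeomorphism between periodic-point sets matches the paper's. The zeta-function part, however, follows a genuinely different route from the paper's argument, and it has a gap worth naming.

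\textbf{What the paper does.} The paper never perturbs $f$ or $g$. Instead it introduces the auxiliary map $F_0:U\times U'\to\R^d\times\R^{d'}$, $F_0(x,y)=((gf)^{m-1}g(y),f(x))$, and shows by two explicit $1$--admissible homotopies in the product space, combined with Proposition~\ref{prop:extradim} (extra dimensions), that $\z_1(F_0,U\times U')=\z_1((gf)^m,V)$ and, symmetrically, $\z_1(F_0,U\times U')=\z_1((fg)^m,V')$. Then Remark~\ref{rmk:iterates} finishes. This argument is purely axiomatic: no smoothness, no transversality, no spectra.

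\textbf{What you do, and where it is incomplete.} Your plan is to reduce to the hyperbolic case and then compare $D(gf)^k_x$ with $D(fg)^k_{f(x)}$ via the identity $\sigma(AB)\setminus\{0\}=\sigma(BA)\setminus\{0\}$. That spectral step is correct and pleasant: with $A=Df_{x_0}$ and $B=Dg_{y_{k-1}}Df_{x_{k-1}}\cdots Dg_{y_0}$, one has $D(gf)^k_{x_0}=BA$ and $D(fg)^k_{y_0}=AB$, so the invariants $(\sigma^-,\sigma^+)$ match and hyperbolicity on one side forces it on the other. The difficulty is the perturbation step. Lemma~\ref{lem:transversal}(ii) asserts that a $C^0$--dense set of maps \emph{near $gf$} has the required transversality; it says nothing about whether such maps can be realised as composites $\tilde g\tilde f$ with $\tilde f,\tilde g$ close to $f,g$. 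Your sentence ``one only needs to apply Lemma~\ref{lem:transversal} on the $\tilde g\tilde f$ side'' therefore does not close the argument: if you take a generic $h$ near $gf$ you lose the composite structure and with it the spectral comparison, while if you insist on composites you must prove a parametric transversality statement (genericity in the pair $(\tilde f,\tilde g)$) that is not contained in Lemma~\ref{lem:transversal}. This is repairable---for instance, after making $f,g$ smooth, apply Thom's parametric transversality to the evaluation $(x,\psi)\mapsto x-((g+\psi)f)^k(x)$ over a sufficiently rich finite-dimensional family of perturbations $\psi$ of $g$---but it needs to be said, and it is noticeably heavier than the paper's product-space homotopy.
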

\begin{proof}
If $x$ is a periodic point of $gf$ of period $k$ then $(gf)^k(x) = x$ so $f(x) = f(gf)^k(x) = (fg)^kf(x)$
hence $f(x)$ is $k$--periodic under $fg$. Conversely, if $y$ is a periodic point of $fg$ of period $k$ then
so is the point $g(y)$ under the map $gf$ and the first statement follows.

In order to address the question about zeta functions, consider the map
$F_0 : U \times U' \to \R^d \to \R^{d'}$ defined by $F_0(x, y) = ((gf)^{m-1}g(y), f(x))$.
Fix $m \le n$ and consider the homotopy $h_t(x, y) = (t(gf)^m(x) + (1-t)(gf)^{m-1}g(y), f(x))$, $0 \le t \le 1$.
Thus, if $h_t(x, y) = (x, y)$ then $y = f(x)$ and $x = t(gf)^m(x) + (1-t)g((fg)^{m-1}f(x)) = (gf)^m(x)$, so we have
$$\Fix(h_t) = \{(x, f(x)): x \in \Fix((gf)^m)\}.$$
If we assume that $((gf), V)$ is $n$--admissible then $\Fix(h_t)$ is compact and the homotopy
$\{(h_t, U \times U')\}$ is 1--admissible. Next, consider the homotopy $h'_t : U \times \R^{d'} \to \R^d \times \R^{d'}$
defined by $h'_t(x, y) = ((gf)^m(x), (1 - t)f(x))$, $0 \le t \le 1$.
Note that the restriction of $h'_0$ to $U \times U'$ is $h_1$. Now, $$\Fix(h'_t) = \{(x, tf(x)): x \in \Fix((gf)^m)\}$$
is again compact and so $\{h'_t\}$ is 1--admissible.
By concatenating both homotopies, we obtain a 1--admissible homotopy from $F_0$ to the map
$F_1(x, y) = ((gf)^m(x), 0)$. We can then use the invariance under homotopies and Proposition \ref{prop:extradim} to conclude that
$$\z_1(F_0, U \times U') = \z_1(h_1, U \times U') = \z_1(h'_0, U \times \R^{d'}) =
\z_1(F_1, U \times \R^{d'}) = \z_1((gf)^m, V).$$
Using the homotopy $h''_t(x, y) = ((gf)^{m-1}g(y), (1 - t)f(x) + t(gf)^m(y))$ and going through similar steps we can
prove that $$\z_1(F_0, U \times U') = \z_1((fg)^m, U').$$
Therefore, we have that $\z_1((gf)^m, V) = \z_1((fg)^m, V')$ for every $1 \le m \le n$.
From Remark \ref{rmk:iterates} it follows that $\z_n(gf, V) = \z_n(fg, V')$.
\end{proof}

\subsection{Extension to ENR}

The commutativity property proved in Proposition \ref{prop:commutativity} allows to extend the zeta function
to maps defined in a ENR in an identical fashion as it is done for the fixed point index (cf. Dold \cite{doldindice}).

\begin{definition}
A topological space $Y$ is a ENR (Euclidean neighborhood retract) if there exists an open subset $U$ of
an Euclidean space $\R^n$ and maps $\iota: Y \to U$, $r: U \to Y$ such that $r \iota = \id_Y$.
\end{definition}

The notion of admissibility extends similarly to this setting in the obvious way.

\begin{definition}\label{def:zetaENR}
Let $(f, V)$ be an $n$--admissible pair where $V$ is an open subset of a ENR $Y$ and $f: V \to Y$.
Let $U$ be an open subset of $\R^n$ and $\iota: Y \to U$, $r: U \to Y$ maps such that
$r \iota = \id_Y$. The zeta function of $(f, V)$ is defined as $\z_n(f, V) = \z_n(\iota f r, U)$.
Analogously, $\z(f, V)$ is defined as $\z(\iota f r, U)$ provided $(f, V)$ is admissible.
\end{definition}

It is straightforward to check the correctness of the definition. Firstly, notice that
the pair $(g = \iota f r, U)$ is $n$--admissible. Indeed, a homeomorphism between $\Fix(f^k)$ and $\Fix(g^k)$
and its inverse is given by $r$ and $\iota$. The independence of the definition from $U, r, \iota$ follows
from the commutativity: if $\iota': Y \to U'$, $r': U' \to Y'$ also satisfy $r'\iota' = \id_Y$ then
the maps $\iota r': U' \to U$ and $\iota' f r: r^{-1}(V) \subset U \to U'$ are defined in open subsets of
Euclidean spaces and by Proposition \ref{prop:commutativity} the zeta functions of their two composites are equal.
Note finally that $(\iota r') (\iota' f r) = \iota f r$ and $(\iota' f r)(\iota r') = \iota' f r'$.


\subsection{Linear term and the fixed point index}\label{subsec:z1istheindex}

Axiomatization of the topological degree was ultimately accomplished by Amann and Weiss \cite{amannweiss}.
In the Euclidean case, Ammann and Weiss theorem on the uniqueness of the degree
had been independently proved by F\"{u}hrer \cite{fuhrer}.


\begin{theorem}[Amann--Weiss, F\"{u}hrer]
There exists a unique map $\deg$ satisfying the following axioms:
\begin{itemize}
\item \textbf{Normalization.} $\deg(\id_{\overline{U}}, U) = 1$ for any open $U \subset \R^d$ containing the origin.
\item \textbf{Additivity.} For every pair of disjoint open $U_1, U_2$ subsets of $U \subset \R^d$
and $f: \overline{U} \to \R^d$ such that $0 \notin f(\overline{U} \setminus (U_1 \cup U_2))$,
$$\deg(f, U) = \deg(f_{|\overline{U_1}}, U_1) + \deg(f_{\overline{|U_2}}, U_2).$$
\item \textbf{Homotopy invariance.}
If $\{h_t: U \to \R^d\}_{t=0}^1$ is a homotopy such that $f^{-1}(0) \cap \partial U = \emptyset$ then
$$\deg(h_0, U) = \deg(h_1, U).$$
\end{itemize}
\end{theorem}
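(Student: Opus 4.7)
The plan is to mirror the present paper's uniqueness argument for the dynamical zeta function in the simpler zero--set setting. Existence is classical: the Brouwer degree, computable for $C^1$ maps with only regular zeros as $\deg(f, U) = \sum_{p \in f^{-1}(0)} \sgn \det Df_p$ and extended to continuous admissible pairs by homotopy invariance, is known to satisfy all three axioms. The real task is uniqueness, which I would obtain by reducing any admissible pair, step by step, to a finite sum of signs forced by the axioms.

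First, exactly as the paper derives property (L) from (M), taking one summand empty in Additivity forces $\deg(f, \emptyset) = 0$ and yields $\deg(f, U) = \deg(f, V)$ whenever $V \subset U$ is open and contains $f^{-1}(0)$. Next I would pin down the degree on linear isomorphisms $A \in GL(d, \R)$. Because $GL_+(d, \R)$ is path connected and any continuous path of invertible linear maps is an admissible homotopy (the only preimage of $0$ is $0$ throughout), Normalization and Homotopy invariance force $\deg(A, \R^d) = 1$ whenever $\det A > 0$. When $\det A < 0$, $A$ is similarly homotopic within $GL_-(d, \R)$ to a fixed standard reflection $R$, so the whole question collapses to computing $\deg(R, \R^d)$.

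To pin down $\deg(R, \R^d)$ I would imitate the cancellation construction that produces $\z(A_{(1,1)})$ in Figure~\ref{fig:homotopy}: inside a large ball, build a $C^1$ map $g$ whose zero set consists of two regular points, one with local Jacobian $\id$ and the other with local Jacobian $R$, and then exhibit an explicit admissible homotopy (in one dimension simply $g_t(x) = x^2 - 1 + t$, suitably scaled and embedded into higher dimension by a constant factor) that first collides both zeros into a single degenerate one and then perturbs them off entirely, keeping the zero set compactly contained in $U$ throughout. Additivity, the localization property, and Homotopy invariance combine to give $\deg(R, \R^d) + \deg(\id, \R^d) = 0$, hence $\deg(R, \R^d) = -1$.

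Finally, for a general admissible $(f, U)$ I would establish a linearization result in the spirit of Proposition~\ref{prop:linearization}: if $p$ is an isolated zero of a $C^1$ map $f$ with $Df_p$ invertible, the segment homotopy $h_t(x) = Df_p(x-p) + t(f(x) - Df_p(x-p))$ is admissible on a small neighborhood of $p$ via the estimate $\|Df_p(x-p)\| \ge \epsilon\|x-p\|$, so $\deg(f, W) = \deg(Df_p, \R^d) = \sgn \det Df_p$. Then, paralleling Subsection~\ref{subsec:general}, the Transversality Theorem lets me perturb $f$ through an admissible homotopy to a $C^1$ map whose zeros in $U$ are finitely many and all regular; Additivity and the previous steps then force $\deg(f, U)$ to equal the sum of signs of determinants at these zeros, a value independent of which putative $\deg$ we started with. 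The main obstacle is the cancellation step for $\deg(R, \R^d)$: one has to produce an explicit geometric move with exactly the two prescribed local models, ensuring admissibility at every intermediate time, which is the analogue of the delicate construction encoded in Figure~\ref{fig:homotopy}.
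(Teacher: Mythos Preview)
The paper does not actually prove this theorem: it is quoted as a classical result, with attribution to Amann--Weiss \cite{amannweiss} and F\"{u}hrer \cite{fuhrer}, and is used only as motivation and later to identify $z_1$ with the fixed point index. So there is no ``paper's own proof'' to compare against.

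That said, your proposal is sound and is precisely the degree--theoretic shadow of the argument the paper carries out for $\z$ in Section~\ref{sec:unicidadzeta}. Localization from additivity, the reduction of $GL(d,\R)$ to two components via admissible paths of invertible matrices, the cancellation homotopy $g_t(x)=x^2-1+t$ (extended by the identity in the remaining coordinates) to force $\deg(R,\R^d)=-1$, the segment--homotopy linearization at a regular zero, and the transversality perturbation to finitely many regular zeros --- each step has its direct analogue in Subsections~\ref{subsec:hyperbolic}--\ref{subsec:general}. The one place to be a bit careful is the cancellation step: make sure you fix a bounded open $U$ containing the zero set of every $g_t$ (e.g.\ a ball of radius $2$) so that the admissibility condition $0\notin g_t(\partial U)$ holds throughout, and note that in dimension $d>1$ the map $(x_1,\ldots,x_d)\mapsto (x_1^2-1+t,x_2,\ldots,x_d)$ has Jacobian $\mathrm{diag}(\pm 2\sqrt{1-t},1,\ldots,1)$ at its two zeros, giving exactly one of each sign as required. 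With that bookkeeping in place your outline is a complete proof.
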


Recall that the fixed point index $i(f, U)$ of a map $f$ in $U$ is defined as the degree of $\id - f$ in $U$.
Note that $i(f, U)$ is well--defined as long as so is the degree and this is, in turn, equivalent to the pair
$(f, U)$ being 1--admissible.

For any 1--admissible pair denote $z_1(f, U)$ the linear coefficient of the series $\z_1(f, U)$.
It is straightforward to check that $z_1(\id-f, U)$ satisfies topological degree axioms, thus
$$z_1(\id - f, U) = \deg(f, U).$$
Consequently, $z_1(f, U)$ is the fixed point index of $f$ in $U$.

Lefschetz zeta function is defined for an admissible pair $(f, U)$ as
$$\exp \left( \sum_{n \ge 1} \frac{i(f^n, U)}{n} \cdot t^n \right).$$
Previous considerations imply that it satisfies axioms (N), (M) and (H) of Theorem \ref{thm:main}.

\begin{lemma}
Lefschetz zeta function satisfies (I).
\end{lemma}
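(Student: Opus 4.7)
The plan is to take the formal logarithm of both sides of the claimed identity
$\z(f,U)(t)=\z((f^k)_{|U_1},U_1)(t^k)$, converting it into
$$\sum_{n \ge 1} \frac{i(f^n, U)}{n} t^n = \sum_{m \ge 1} \frac{i((f^k)^m_{|U_1}, U_1)}{m} (t^k)^m.$$
Setting $n=km$ on the right and comparing coefficients reduces everything to two elementary statements about the fixed point index: (a) $i(f^n, U)=0$ whenever $k\nmid n$, and (b) $i(f^{km}, U) = k \cdot i(f^{km}_{|U_1}, U_1)$ for every $m\ge 1$.

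For (a), every fixed point of $f^n$ belongs to $\Per(f)$. The iteration hypothesis $f(\Per(f)\cap U_i)\subset U_{i+1}$ implies that any $x\in\Per(f)\cap U_i$ is carried by $f^n$ into $U_{i+n \bmod k}$. Since the $U_i$ are pairwise disjoint, the equality $f^n(x)=x$ forces $k\mid n$. Hence if $k\nmid n$ then $\Fix(f^n)\cap U=\emptyset$ and the fixed point index vanishes.

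For (b), shrink each $U_i$ to an open neighborhood $V_i\subset U_i$ of $\Fix(f^{km})\cap U_i$ chosen so that $f(\overline{V_i})\subset V_{i+1}$; this is possible by continuity of $f$ together with the hypothesis on periodic points. Additivity of the fixed point index over the disjoint union $V=V_1\sqcup\cdots\sqcup V_k$ (which still contains $\Fix(f^{km})\cap U$) gives $i(f^{km},U)=\sum_{i=1}^k i(f^{km},V_i)$, so it suffices to show these $k$ summands all coincide. Writing $\phi_j=f|_{V_j}\colon V_j\to V_{j+1}$, the composition
$f^{k}|_{V_i}=\phi_{i-1}\circ\cdots\circ\phi_1\circ\phi_k\circ\cdots\circ\phi_i$
differs from $f^{k}|_{V_1}=\phi_k\circ\cdots\circ\phi_1$ by a cyclic shuffle of its $k$ factors, and likewise for their $m$-th iterates. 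The classical commutativity of the fixed point index, $i(AB,W)=i(BA,W')$ (Dold, \cite{doldindice}), applied repeatedly to move one factor at a time from the left end of the composition to the right end, yields $i(f^{km},V_i)=i(f^{km},V_1)$. Combining with additivity gives (b).

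The main obstacle I anticipate is precisely this last step: commutativity must be invoked as a classical property of the fixed point index, rather than via the commutativity Proposition~\ref{prop:commutativity} for $\z_n$, since that proposition ultimately rests on axiom (I), which is the very property presently being established for the Lefschetz zeta function. Beyond that, the bookkeeping—nesting the $V_i$ so that the restrictions $\phi_j$ become genuinely composable, and checking that no periodic points up to period $km$ are lost during the shrinking—is routine.
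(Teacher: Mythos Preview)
Your proof is correct and follows essentially the same route as the paper: take logarithms and compare coefficients, observe $i(f^n,U)=0$ when $k\nmid n$, split $i(f^{km},U)$ by additivity over the $U_j$, and invoke the classical commutativity of the fixed point index (not Proposition~\ref{prop:commutativity}) to show all $k$ summands coincide. The only cosmetic difference is that the paper skips your shrinking to $V_i$ by applying commutativity once to the two-factor splitting $f^{mk}=f^{j-1}\circ f^{mk-j+1}$, where the domains adjust automatically via preimages, rather than cycling one $\phi_j$ at a time.
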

\begin{proof}
By localization, $i(f^n, U) = \sum_{j = 1}^k i(f^n, U_j)$ and all the indices are zero unless $n = mk$, 
for some integer $m \ge 1$. For any $j = 1 \ldots k$, splitting $f^{mk} = f^{j-1} \circ f^{mk-j+1}$, the commutativity of
the fixed point index implies
$$i(f^{mk}, U_1) = i(f^{j-1} \circ f^{mk-j+1}, U_1) = i(f^{mk-j+1} \circ f^{j-1}, U_j) = i(f^{mk}, U_j).$$
As a consequence,
$$\sum_{n \ge 1} \frac{i(f^n, U)}{n} \cdot t^n = \sum_{m \ge 1} \frac{i(f^{mk}, U)}{mk} \cdot t^{mk} =
\sum_{m \ge 1} \frac{k \cdot i(f^{mk}, U_1)}{mk} \cdot t^{mk} = \sum_{m \ge 1} \frac{i((f^k)^m, U_1)}{m} \cdot (t^k)^m.$$
\end{proof}

In sum, we have proved the following:
\begin{theorem}\label{prop:lefschetz}
Lefschetz zeta function is the only dynamical zeta function.
\end{theorem}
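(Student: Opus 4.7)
The plan is to invoke the uniqueness statement of Theorem \ref{thm:main} and complete the argument by checking that the Lefschetz zeta function
$$L(f, U)(t) = \exp \left( \sum_{n \ge 1} \frac{i(f^n, U)}{n} \cdot t^n \right)$$
is a dynamical zeta function, i.e. satisfies (N), (M), (H) and (I). The preceding lemma already handles (I), so what remains is to verify the first three axioms, and then to conclude by uniqueness.

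For (N), note that for the constant map $c_p$ we have $c_p^n = c_p$ for every $n \ge 1$, so $i(c_p^n, U) = i(c_p, U) = 1$ by the normalization of the fixed point index. Therefore
$$L(c_p, U)(t) = \exp\left( \sum_{n \ge 1} \frac{t^n}{n} \right) = \exp(-\log(1 - t)) = \frac{1}{1 - t},$$
which is precisely (N). For (M), the hypothesis that $\Per(f) \cap V$ and $\Per(f) \cap W$ are $f$-invariant ensures that $\Fix(f^n) \cap U = (\Fix(f^n) \cap V) \sqcup (\Fix(f^n) \cap W)$ for every $n \ge 1$, so the additivity and localization of the fixed point index give $i(f^n, U) = i(f^n_{|V}, V) + i(f^n_{|W}, W)$. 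Exponentiating the corresponding sum-of-logarithms decomposition yields $L(f, U) = L(f_{|V}, V) \cdot L(f_{|W}, W)$. For (H), admissibility of $\{(h_t, U)\}$ for every iterate implies homotopy invariance of the fixed point index $i(h_t^n, U)$, so each coefficient of the series inside the exponential is constant in $t$.

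With (N), (M), (H) established here and (I) established in the preceding lemma, $L(\cdot, \cdot)$ is a dynamical zeta function. Theorem \ref{thm:main}, whose proof is completed in Subsection \ref{subsec:general}, asserts there is \emph{only one} such function. Hence $\mathcal{Z} = L$, which is the desired conclusion.

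There is no genuine obstacle left at this point; the only delicate verification is (M), where one has to be careful that the invariance hypothesis on periodic sets is inherited by all iterates so that the additivity of the fixed point index can be applied term by term inside the exponential. Once this bookkeeping is done, the theorem falls out by combining the axiomatic uniqueness with the axiomatic verification.
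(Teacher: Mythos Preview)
Your proposal is correct and follows essentially the same route as the paper: verify that the Lefschetz zeta function satisfies (N), (M), (H) via the standard properties of the fixed point index, cite the preceding lemma for (I), and then invoke the uniqueness part of Theorem~\ref{thm:main}. The only difference is cosmetic: the paper states (N), (M), (H) as immediate consequences of the Amann--Weiss axioms for the index without writing out the computations, whereas you spell them out explicitly.
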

\section{Symmetric products}\label{sec:productossimetricos}

\subsection{Definition}

Given a space $X$ and $n \ge 1$, the $n^{th}$ (or $n$--fold) symmetric product (or power) of $X$, denoted $SP_n(X)$,
is the quotient of $X^n$ by the action of the symmetric group $\Sigma_n$ that permutes the factors.
The projection of a point $(x_1, \ldots, x_n) \in X^n$ is denoted $[x_1, \ldots, x_n] \in SP_n(X)$.

Choosing a basepoint $x_0 \in X$ there are natural embeddings $SP_n(X) \hookrightarrow SP_{n+1}(X)$ given by
$[x_1, \ldots, x_n] \mapsto [x_0, x_1, \ldots, x_n]$. The infinite symmetric product $SP(X)$ is defined as the direct limit of
$(SP_n(X))_{n \ge 1}$ equipped with these inclusions.

There are several works in the literature proving that the topology of $X$ determines that of $SP_n(X)$ and $SP(X)$.
For instance, Macdonald \cite{macdonald} computed the Euler characteristic of $SP_n(X)$ in terms of
the Euler characteristic of $X$
and then Dold \cite{doldhomologiaSP} proved that the homology groups of $SP_n(X)$ only depend on the groups $H_q(X)$ for every
CW--complex $X$. Later, Dold and Thom \cite{doldthom} discovered that the homotopy groups of $SP(X)$ are
isomorphic to the integral homology groups of $X$.

\begin{example}[Liao \cite{liao}]\label{ex:liao}
The $n^{th}$ symmetric product of $\R^2$ has a very simple description. Indeed, after identifying $\R^2$ to $\C$, 
points in $SP_n(\C)$ can be thought of as sets of roots of monic degree--$n$ polynomials with coefficients in $\C$.
This correspondence defines a homeomorphism between $SP_n(\C)$ and $\C^n \cong \R^{2n}$.
This argument also serves to identify $SP_2(\R)$ to the space of monic degree--$2$ polynomials with real coefficients
and real roots and thus to $\{(b, c): \; b^2 \ge 4c \} \subset \R^2$. However, it is probably easier
to visualize $SP_n(\R)$ as the subset of $\R^n$ whose coordinates are in increasing order:
$\{(x_1, \ldots x_n): \; x_1 \le \ldots \le x_n\}$.
\end{example}

The following lemma will be useful in the sequel.

\begin{lemma}\label{lem:productodelaunion}[Additivity for symmetric products]
$SP_n(X \sqcup Y)$ is homeomorphic to $$SP_n(X) \sqcup (SP_{n-1}(X) \times SP_1(Y)) \sqcup \ldots
\sqcup (SP_1(X) \times SP_{n-1}(Y)) \sqcup SP_n(Y).$$
\end{lemma}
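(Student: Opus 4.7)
The plan is to exhibit an explicit homeomorphism by partitioning an unordered $n$-tuple from $X \sqcup Y$ according to how many elements fall in each summand, and matching each piece with the corresponding product of symmetric products.

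First, for each $0 \le k \le n$, I would let $U_k \subset SP_n(X \sqcup Y)$ be the set of classes $[z_1, \ldots, z_n]$ exactly $k$ of whose coordinates lie in $X$ (and thus $n-k$ in $Y$). Since $X$ and $Y$ are both open and closed in $X \sqcup Y$ and the condition is invariant under $\Sigma_n$, each $U_k$ is both open and closed in $SP_n(X \sqcup Y)$. Being pairwise disjoint and jointly exhaustive, they yield the decomposition $SP_n(X \sqcup Y) = \bigsqcup_{k=0}^n U_k$, matching the summands listed on the right--hand side.

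Second, I would define the sorting map $\Phi_k : U_k \to SP_k(X) \times SP_{n-k}(Y)$ by sending $[z_1, \ldots, z_n]$ to the pair formed by its $X$-entries and its $Y$-entries as unordered tuples (with the convention that $SP_0$ is a point factor at the two extremes). Since reordering the $z_i$ changes neither unordered sub-tuple, $\Phi_k$ is well defined. Its candidate inverse is the concatenation $\Psi_k([x_1,\ldots,x_k],[y_1,\ldots,y_{n-k}]) = [x_1,\ldots,x_k,y_1,\ldots,y_{n-k}]$, and the two maps are manifestly mutually inverse on the underlying sets.

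Third, bicontinuity follows from universal properties of quotients. Write $\pi : (X \sqcup Y)^n \to SP_n(X \sqcup Y)$ for the canonical projection. The preimage $\pi^{-1}(U_k)$ decomposes as a disjoint union, over $k$-subsets $\sigma \subset \{1, \ldots, n\}$, of cells $Z_\sigma \cong X^k \times Y^{n-k}$ having $X$'s in the positions indexed by $\sigma$. On each $Z_\sigma$ the evident reordering into $SP_k(X) \times SP_{n-k}(Y)$ is continuous, and the collection is $\Sigma_n$-invariant, so descends through $\pi$ to a continuous $\Phi_k$. Conversely, the inclusion $X^k \times Y^{n-k} \hookrightarrow (X \sqcup Y)^n$ followed by $\pi$ is continuous and $(\Sigma_k \times \Sigma_{n-k})$-invariant, so descends to a continuous $\Psi_k$.

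The only delicate point is the identification of the quotient of $X^k \times Y^{n-k}$ by $\Sigma_k \times \Sigma_{n-k}$ with the product $SP_k(X) \times SP_{n-k}(Y)$, which requires that a product of two quotient maps be a quotient map. This holds under mild hypotheses on the spaces involved, in particular for the ENRs that appear throughout the paper. Apart from this mild topological technicality, the proof is pure bookkeeping.
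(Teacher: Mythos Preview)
Your proof is correct and follows the same approach as the paper: the paper simply writes down the concatenation map $\alpha([x_1,\ldots,x_j],[y_1,\ldots,y_{n-j}]) = [x_1,\ldots,x_j,y_1,\ldots,y_{n-j}]$ (your $\Psi_k$) and declares it a homeomorphism, while you additionally supply the inverse, the clopen decomposition into the $U_k$, and the bicontinuity argument via quotient universal properties. The extra care you take with the product-of-quotients issue is not addressed in the paper, which treats the lemma as essentially self-evident.
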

\begin{proof}
The homeomorphism $\alpha: \sqcup_{j = 0}^n SP_j(X) \times SP_{n-j}(Y) \to SP_n(X \sqcup Y)$ is
defined by $\alpha([x_1, \ldots, x_j], [y_1, \ldots\, y_{n-j}]) = [x_1, \ldots, x_j, y_1, \ldots, y_{n-j}]$.
\end{proof}

\subsection{Fixed point index}

Let \(X\)  be a ENR, \(U \subset X\) be an open set and \(f : U \rightarrow X\)  be a continuous map such that \(Per_n(f)\) is compact for some \(n \in {\mathbb N}\).
Our map \(f\) induces canonically another continuous map \(SP_n(f): SP_n(U) \rightarrow SP_n(X)\) by the formula
$SP_n(f) ([x_1,x_2, \dots, x_n])=[f(x_1),f(x_2), \dots, f(x_n))]$.

Floyd  \cite{floyd} proved that the $n^{th}$ symmetric product of an ANR (absolute neighborhood retract) is also an ANR,
see also Jaworowski \cite{jawo}. ENRs are characterized by being separable metric finite--dimensional locally compact ANRs,
see \cite{doldlibro, granas}. Since all these properties are inherited by $n^{th}$ symmetric products we conclude that
$SP_n(X)$ is a ENR.

Recall from \cite{doldindice, granas, JM} that the definition of the fixed point index in ENRs is
$i(f, V) = i(\iota f r, U)$ where we have employed the same notation as in Definition \ref{def:zetaENR}.
The fixed point index of \(SP_n(f)\) in \(SP_n(U)\), denoted $i(SP_n(f), SP_n(U)) \in \Z$, is well--defined
provided that the set of fixed points of \(SP_n(f)\) is compact in $SP_n(U)$, which is guaranteed if $\Per_n(f)$ is compact.

Most of the classical properties of the fixed point index extend in a trivial fashion to the world of symmetric products.
Only the analogue of the additivity property is not trivial and needs a proof.

\begin{proposition} {\bf Multiplicativity} \label{prop:indicesunion}
Let \(X\) be an ENR. Let $(f, U)$ be an $n$--admissible pair and $U_1, U_2$ disjoint
open subsets of $U$ such that $\Per_n(f) \cap U = \Per_n(f) \cap (U_1 \cup U_2)$ and $\Per_n(f) \cap U_1, \Per_n(f)\cap U_2$
is invariant under $f$. Then,
$$
i(SP_{n}(f), SP_n(U))= \sum_{j=0}^{n} i(SP_{j}(f), SP_j(U_1)) \cdot i(SP_{n-j}(f),SP_{n-j}(U_{2})).
$$
\end{proposition}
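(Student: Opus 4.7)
The plan is to combine the additivity for symmetric products (the previous lemma) with the standard additivity and product properties of the fixed point index, after a careful localization.

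First I would choose nice neighborhoods of the periodic sets. The sets $\Per_n(f)\cap U_1$ and $\Per_n(f)\cap U_2$ are compact and contained in disjoint open sets, so I can pick open $V_i\subset U_i$ containing $\Per_n(f)\cap U_i$ with $\overline{V_1}\cap\overline{V_2}=\emptyset$. Using the invariance of $\Per_n(f)\cap U_i$ under $f$ together with continuity, I can shrink the $V_i$ further so that $f(V_i)\subset U_i$; this is the key technical arrangement that will let the product structure appear on each piece. Any fixed point of $SP_n(f)$ in $SP_n(U)$ is a multiset of points permuted by $f$, hence a multiset of points in $\Per_n(f)\cap U\subset V_1\sqcup V_2$, so by localization of the fixed point index in the ENR $SP_n(X)$,
$$i(SP_n(f), SP_n(U))=i(SP_n(f), SP_n(V_1\sqcup V_2)).$$

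Next, Lemma \ref{lem:productodelaunion} gives the homeomorphism
$$\alpha: \bigsqcup_{j=0}^{n} SP_j(V_1)\times SP_{n-j}(V_2)\;\xrightarrow{\;\cong\;}\; SP_n(V_1\sqcup V_2).$$
Each fixed point of $SP_n(f)$ falls into exactly one piece $W_j:=SP_j(V_1)\times SP_{n-j}(V_2)$, according to how many coordinates lie in $V_1$. Additivity of the fixed point index therefore yields
$$i(SP_n(f), SP_n(V_1\sqcup V_2))=\sum_{j=0}^{n} i(SP_n(f), W_j).$$

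For each $j$ I would then exhibit the restriction $SP_n(f)|_{W_j}$ as a genuine product. Since $f(V_i)\subset U_i$ and $U_1\cap U_2=\emptyset$, the natural continuous map $\beta: SP_j(U_1)\times SP_{n-j}(U_2)\to SP_n(X)$ defined by concatenation is injective, and under $\alpha$ (and $\beta$) the diagram identifies $SP_n(f)|_{W_j}$ with $SP_j(f|_{V_1})\times SP_{n-j}(f|_{V_2})$. Invariance of the fixed point index under these local identifications, plus the product formula for the fixed point index in ENRs, gives
$$i(SP_n(f), W_j)=i\bigl(SP_j(f|_{V_1}), SP_j(V_1)\bigr)\cdot i\bigl(SP_{n-j}(f|_{V_2}), SP_{n-j}(V_2)\bigr),$$
and a final application of localization on each factor rewrites these as $i(SP_j(f),SP_j(U_1))$ and $i(SP_{n-j}(f),SP_{n-j}(U_2))$.

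The main obstacle I anticipate is the product-on-each-piece step. The target of $SP_n(f)$ is $SP_n(X)$, not literally a product, so some bookkeeping is needed to see that, after the careful shrinking of the $V_i$, the restriction to $W_j$ really behaves like $SP_j(f)\times SP_{n-j}(f)$ well enough for the product formula of the fixed point index to apply verbatim. Everything else (localization, additivity, disjointness of periodic points) is routine once the correct neighborhoods have been chosen.
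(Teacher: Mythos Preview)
Your proposal is correct and follows essentially the same route as the paper: localize to $SP_n$ of the disjoint union, apply Lemma~\ref{lem:productodelaunion}, and then use additivity and the product formula for the fixed point index on each piece $SP_j\times SP_{n-j}$. In fact your extra shrinking step $f(V_i)\subset U_i$ makes rigorous exactly the point the paper treats somewhat informally when it asserts that $SP_n(f)$ ``acts on each of the terms as the product'' of $SP_j(f)$ and $SP_{n-j}(f)$.
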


\begin{proof}
Firstly, observe that a fixed point of $SP_n(f)$ is an element of $SP_n(U)$ composed of periodic orbits of $f$
whose period can not be greater than $n$. Therefore, all fixed points of $SP_n(f)$ lie in $SP_n(U_1 \sqcup U_2)$
and the excision property of the fixed point index yields
$$i(SP_n(f), SP_n(U)) = i(SP_n(f), SP_n(U_1 \sqcup U_2)).$$

Lemma \ref{lem:productodelaunion} provides a suitable decomposition of $SP_n(U_1 \sqcup U_2)$. The map
$SP_n(f)$ acts on each of the terms $SP_j(U_1) \times SP_{n-j}(U_2)$ as the product of the maps
$SP_j(f): SP_j(U_1) \to SP_j(X)$ and $SP_{n-j}(f): SP_{n-j}(U_2) \to SP_{n-j}(X)$.
Using the additivity and multiplicativity properties of the fixed point index we obtain
\begin{align*}
i(SP_{n}(f), SP_n(U_1 \sqcup U_2)) &= \sum_{j = 0}^n i(SP_j(f) \times SP_{n-j}(f), SP_j(U_1) \times SP_{n-j}(U_2))\\
 &= \sum_{j=0}^{n} i(SP_{j}(f), SP_j(U_1)) \cdot i(SP_{n-j}(f),SP_{n-j}(U_{2})).\\
\end{align*}
\end{proof}

\subsection{Zeta function of symmetric products}

Given an open subset $U$ of a ENR $X$ and a map $f: U \to X$ such that $(f, U)$ is admissible, the sequence of indices
$(i(SP_n(f), SP_n(U)))_n$ is well--defined. Its generating function is
$$SP_{\infty}(f, U) = 1 + \sum_{n \ge 1} i(SP_n(f), SP_n(U)) \cdot t^n.$$

The purpose of this subsection is to show that $SP_{\infty}$ is a dynamical zeta function, i.e.,
it satisfies axioms (N), (M), (H) and (I) in Theorem \ref{thm:main}.

The following lemma can be deduced from the definition of index in $\R^d$.

\begin{lemma}
Let $V$ be an open subset of a ENR $Y$ and $c_q: V \to Y$ the constant map which maps every point to $q \in V$.
Then $i(c_q, V) = 1$.
\end{lemma}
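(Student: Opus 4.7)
The plan is to reduce to the Euclidean case via the ENR definition of the fixed point index, and then apply the normalization axiom. Concretely, following the same recipe as in Definition \ref{def:zetaENR}, fix an open subset $U \subset \R^n$ together with maps $\iota: Y \to U$ and $r: U \to Y$ satisfying $r\iota = \id_Y$. By definition of the fixed point index on ENRs, $i(c_q, V) = i(\iota c_q r, r^{-1}(V))$. The composite $\iota c_q r$ is simply the constant map on the open set $r^{-1}(V) \subset \R^n$ with value $\iota(q)$, and $\iota(q) \in r^{-1}(V)$ since $r(\iota(q)) = q \in V$. Thus the problem reduces to computing the fixed point index of a Euclidean constant map $c_p : W \to \R^n$ where $p \in W$ is open.

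For this Euclidean case I would invoke the normalization axiom (N) of Theorem \ref{thm:main}, which gives $\z(c_p, W)(t) = 1/(1-t) = 1 + t + t^2 + \ldots$, so the linear coefficient equals $1$. By the identification in Subsection \ref{subsec:z1istheindex} between the linear coefficient $z_1(f, U)$ of the zeta function and the fixed point index $i(f, U)$, this yields $i(c_p, W) = 1$, and hence $i(c_q, V) = 1$. A perfectly equivalent, self-contained argument is to unfold the definition $i(c_p, W) = \deg(\id - c_p, W)$ and observe that $\id - c_p$ is merely a translation of the identity with unique zero $p \in W$, so its topological degree is $1$ directly from the normalization axiom of the Amann--Weiss--F\"{u}hrer theorem.

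There is no substantial obstacle here: the statement is essentially a restatement of normalization in the ENR setting. The only point worth checking is that the standard ENR extension of the fixed point index is independent of the chosen $(U, \iota, r)$, which is done exactly as for $\z$ in Definition \ref{def:zetaENR} using the commutativity of the index (the classical analogue of Proposition \ref{prop:commutativity}).
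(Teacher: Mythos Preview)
Your proof is correct and aligns with what the paper intends: the paper merely says the lemma ``can be deduced from the definition of index in $\R^d$'' without giving details, and your reduction via $\iota, r$ to a Euclidean constant map followed by the degree normalization (your second argument) is exactly that deduction spelled out. Your first route through axiom (N) and the identification $z_1 = i$ from Subsection~\ref{subsec:z1istheindex} is also valid and not circular---the lemma is only invoked later to verify that $SP_\infty$ satisfies (N), not to establish anything about the Lefschetz zeta function---but it is more roundabout than needed for such an elementary fact.
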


Applying this lemma to $V = SP_n(U), Y = SP_n(X)$ and $q = [p, \ldots, p] \in V$ we obtain that
$i(SP_n(c_p), SP_n(U)) = 1$ and
$$SP_{\infty}(c_p, U) = 1 + t + t^2 + \ldots = 1/(1 - t)$$
so the normalization property (N) holds.

The multiplicativity property (M) is a direct consequence of Proposition \ref{prop:indicesunion}.
Indeed, the coefficient of $t^n$ in the product $SP_{\infty}(f, V) \cdot SP_{\infty}(f, W)$ is equal to
$$\sum_{j \ge 0}^n i(SP_j(f), SP_j(V)) \cdot i(SP_{n-j}(f), SP_{n-j}(W)) = i(SP_n(f), V \sqcup W).$$

Homotopy invariance (H) follows easily from the same property for the fixed point index
so it is only left to prove property (I).

\begin{proposition}
\textbf{Iteration.} Let $(f, U)$ be an admissible pair and assume that $U$ is the disjoint union
of $k \ge 1$, $U = U_1 \cup \ldots \cup U_{k}$, such that $f(\Per(f) \cap U_i) \subset U_{i+1}$ (indices are taken mod $k$). Then
$$SP_{\infty}(f, U)(t) = SP_{\infty}((f^k)_{|U_1}, U_1)(t^k).$$
\end{proposition}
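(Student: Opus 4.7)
The plan is to identify the coefficient of each $t^n$ on both sides. On the left, this coefficient is $i(SP_n(f), SP_n(U))$; on the right it is $i(SP_m(f^k), SP_m(U_1))$ when $n = mk$ and $0$ otherwise. So it suffices to establish those two statements about indices, which I would do by exploiting the cyclic structure of $SP_n(f)$ on the natural disjoint decomposition of $SP_n(U)$.

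First I would localize. All fixed points of $SP_n(f)$ lie in $SP_n(\Per_n(f))$, and since $\Per_n(f) \cap U$ is compact and $f(\Per(f) \cap U_i) \subset U_{i+1}$, I can shrink each $U_i$ to a neighborhood of $\Per_n(f) \cap U_i$ on which $f(U_i) \subset U_{i+1}$, without altering the index in question. Iterating Lemma \ref{lem:productodelaunion} decomposes
$$SP_n(U) = \bigsqcup_{j_1 + \cdots + j_k = n} SP_{j_1}(U_1) \times \cdots \times SP_{j_k}(U_k),$$
and the containment $f(U_i) \subset U_{i+1}$ shows that $SP_n(f)$ sends the piece with indices $(j_1,\ldots,j_k)$ into the piece with indices $(j_k, j_1, \ldots, j_{k-1})$. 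A piece can therefore contain fixed points of $SP_n(f)$ only if its multi-index is cyclically invariant, i.e.\ $j_1 = \cdots = j_k = m$, which forces $k \mid n$. Additivity of the fixed point index across the pieces then immediately gives $i(SP_n(f), SP_n(U)) = 0$ when $k \nmid n$ and, for $n = mk$, reduces the problem to computing $i(G, P)$, where $P = SP_m(U_1) \times \cdots \times SP_m(U_k)$ and $G$ is the restriction of $SP_n(f)$ to $P$, namely
$$G(x_1, \ldots, x_k) = (g_k(x_k), g_1(x_1), \ldots, g_{k-1}(x_{k-1})), \qquad g_i := SP_m(f_{|U_i}).$$

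The core of the argument is then to prove $i(G, P) = i(SP_m(f^k), SP_m(U_1))$. I would invoke Proposition \ref{prop:commutativity}: defining
$$s(x) = (x, g_1(x), g_2 g_1(x), \ldots, g_{k-1}\cdots g_1(x)), \qquad r(x_1,\ldots,x_k) = g_k(x_k),$$
one has $rs = g_k g_{k-1} \cdots g_1 = SP_m(f^k)$, so commutativity (applied in the ENR setting via Definition \ref{def:zetaENR}) yields $i(sr, P) = i(rs, SP_m(U_1)) = i(SP_m(f^k), SP_m(U_1))$. To bridge the gap between $sr$ and $G$, I would construct an admissible homotopy between them, working in the ambient Euclidean space of the ENR $P$ so that convex combinations are available: define $H_t(x_1,\ldots,x_k)$ to have first coordinate $g_k(x_k)$ and, for $i \ge 2$, $i$-th coordinate $g_{i-1}(z_{i-1}(t))$, where $z_1(t) = (1-t)x_1 + t g_k(x_k)$ and recursively $z_i(t) = (1-t) x_i + t g_{i-1}(z_{i-1}(t))$. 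A short induction shows that at any fixed point of $H_t$ one has $z_i(t) = x_i$ for every $i$, so $\Fix(H_t)$ is independent of $t$ and equals $\Fix(G) = \Fix(sr)$; the homotopy is admissible and homotopy invariance closes the identification.

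I expect this last step to be the delicate one: the maps $G$ and $sr$ coincide on their common fixed point set but differ elsewhere, so an interpolation is unavoidable, and one must verify it introduces no spurious fixed points. The recursive choice of $z_i(t)$ is engineered exactly so that the fixed-point chain $x_1 = g_k(x_k)$, $x_2 = g_1(x_1)$, \ldots, collapses each interpolation to $x_i$ at fixed points; in the ENR setting the convex combinations are performed in the ambient retract provided by Definition \ref{def:zetaENR}. Once $i(G, P) = i(SP_m(f^k), SP_m(U_1))$ is established, summing over $n = mk$ yields
$$SP_\infty(f, U)(t) = 1 + \sum_{m \ge 1} i(SP_m(f^k), SP_m(U_1))\, t^{mk} = SP_\infty((f^k)_{|U_1}, U_1)(t^k),$$
which is exactly axiom (I).
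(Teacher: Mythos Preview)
Your proposal is correct and follows essentially the same route as the paper: localize so that fixed points of $SP_n(f)$ lie in the ``diagonal'' piece $SP_m(U_1)\times\cdots\times SP_m(U_k)$ of the decomposition from Lemma~\ref{lem:productodelaunion}, then identify the index of the cyclic map $G$ with $i(SP_m(f^k),SP_m(U_1))$ via commutativity of the fixed point index together with an explicit convex homotopy whose fixed set is constant. The only differences are cosmetic: you treat the case $k\nmid n$ explicitly (the paper leaves it implicit in the excision step), and your factorization $G\sim sr$ with $r=g_k\circ\mathrm{proj}_k$ and recursive homotopy is a mild variant of the paper's $G\sim G\circ s\circ\pi_1$ with $\pi_1=\mathrm{proj}_1$; both interpolations have the same fixed set $s(\Fix(g^k))$ for all $t$. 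One small remark: the commutativity you need here is that of the classical fixed point index (as used in the paper's Claim), which is of course equivalent to the $\z_1$ case of Proposition~\ref{prop:commutativity} via Subsection~\ref{subsec:z1istheindex}.
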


\begin{proof}
We have to show that  $i(SP_{km}(f), SP_{km}(U)) = i(SP_m((f^k)_{U_1}), SP_m(U_1))$ for every $m \in \N$.
We can assume that \(X \subset \R^d\), \(SP_{m}(X) \subset \R^q\) and the set \(Per_{km}(f)\) is finite.

Using the excision property of the fixed point index
$$i(SP_{km}(f), SP_{km}(U))=i(SP_{km}(f), S),$$
where $S$ is an open set homeomorphic to $SP_m(U_1) \times \dots \times SP_m(U_k)$.
The result follows from the following claim.

\medskip

{\bf Claim.} Let $V \subset \R^q$ be an open set and
$(g, V)$ be an admissible pair and assume that $V$ is the disjoint union
of $k \ge 1$, $V = V_1 \cup \ldots \cup V_{k}$, such that $g(\Per(g) \cap V_i) \subset V_{i+1}$ (indices are taken mod $k$).
Let $G: V_1 \times V_2 \times \dots \times V_k \rightarrow \R^q \times \R^q \times \dots \times \R^q$
be the map defined as $G(x_1,x_2, \dots, x_k)=(g(x_k), g(x_1), g(x_2), \dots, g(x_{k-1}))$.
Then $$i(G, V_1 \times V_2 \times \dots \times V_k ) = i(g^k, V_1).$$

\medskip

{\em Proof of the Claim. }
Indeed, consider the projection $\pi_1: V_1 \times V_2 \times \dots \times V_k \rightarrow V_1$
and introduce a  map \(s: V_1\rightarrow V_1 \times V_2 \times \dots \times V_k\)  defined as
$s(x_1)=(x_1, g(x_1), g^2(x_1), \dots, g^{k-1}(x_1))$.

By the commutativity property of the fixed point index
$$i(g^k, V_1)= i(\pi_1 \circ G \circ s, V_1) = i(G \circ s \circ \pi_1, V_1 \times V_2 \times \dots \times V_k).$$

Restricted to a small neighborhood of $\Per(g)$ the homotopy
\begin{multline*}
H_t(x_1, x_2, \dots, x_k) = \\
=(g(tx_k + (1-t)g^{k-1}(x_1)), g(x_1), g(tx_2 +(1-t)g(x_1)), \dots, g(tx_{k-1} + (1-t)g^{k-2}(x_1)))
\end{multline*}

\noindent is well--defined and connects $G \circ s \circ \pi_1$ to $G$.
Moreover $\Fix(H_t)=s(\Fix(g^k) \cap V_1)$ is a compact set contained in the interior of the domain of $H_t$ so
$$i(G, V_1 \times V_2 \times \dots \times V_k) =
i(G \circ s \circ \pi_1, V_1 \times V_2 \times \dots \times V_k) = i(g^k, V_1).$$
\end{proof}

In summation, $SP_{\infty}$ satisfies all four axioms (N), (M), (H) and (I) so the proof of
Proposition \ref{prop:iszeta} is concluded. The next corollary then follows from the uniqueness
of dynamical zeta functions proved in Theorem \ref{thm:main}.

\begin{corollary}[Dold \cite{doldparma}]\label{cor:dold}
$SP_{\infty}(f,U)$ is equal to the Lefschetz zeta function
for every admissible pair $(f,U) \in \mathcal{A}(U)$,
$$1 + \sum_{n \ge 1} i(SP_n(f), SP_n(U)) \cdot t^n = \exp \left( \frac{i(f^n, U)}{n} \cdot t^n \right).$$
\end{corollary}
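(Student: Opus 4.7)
The plan is essentially a one-line appeal to uniqueness, with the two inputs already assembled. I would begin by observing that Proposition \ref{prop:iszeta} (just established) tells us $SP_{\infty}$ is a dynamical zeta function in the sense of Theorem \ref{thm:main}: it satisfies the four axioms (N), (M), (H) and (I) on $\mathcal{A}(U)$. On the other side, Theorem \ref{prop:lefschetz} asserts that the Lefschetz zeta function $\exp\left(\sum_{n \ge 1} \frac{i(f^n, U)}{n} t^n\right)$ is itself a dynamical zeta function, i.e.\ also satisfies (N), (M), (H) and (I).

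Having two functions $\mathcal{A}(U) \to 1 + t\cdot\Z[[t]]$ which both fulfill the axiomatic list, I would invoke the uniqueness part of Theorem \ref{thm:main} to conclude that they agree on every admissible pair $(f,U)$. This gives precisely the stated identity
\[
1 + \sum_{n \ge 1} i(SP_n(f), SP_n(U)) \cdot t^n = \exp\left( \sum_{n\ge 1} \frac{i(f^n, U)}{n} \cdot t^n \right).
\]

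There is no genuine obstacle at this stage: the conceptual work has been packaged into the two previous results, and the corollary is a formal consequence. The only minor bookkeeping issue is to ensure that both sides are being evaluated on the same class of admissible pairs and that the codomain $1 + t\cdot\Z[[t]]$ is the same for both, which is immediate from the definitions. Thus the proof is a direct citation of Proposition \ref{prop:iszeta}, Theorem \ref{prop:lefschetz} and the uniqueness in Theorem \ref{thm:main}, and can be written in two or three sentences.
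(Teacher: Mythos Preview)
Your proposal is correct and matches the paper's own argument exactly: the paper states that the corollary ``follows from the uniqueness of dynamical zeta functions proved in Theorem~\ref{thm:main}'' immediately after concluding Proposition~\ref{prop:iszeta}, and Theorem~\ref{prop:lefschetz} supplies the other half. There is nothing to add.
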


\section{Applications}\label{sec:applications}

\subsection{Euler characteristic of symmetric products}

The formula discovered by Macdonald for the generating function of the Euler characteristics of symmetric products
is easily obtained from Corollary \ref{cor:dold}.
\begin{corollary}[Macdonald \cite{macdonald}]\label{cor:euler}
Let $X$ be a compact ENR. Then,
$$1 + \sum_{n \ge 1} \chi(SP_n(X)) t^n = (1 - t)^{-\chi(X)}.$$
\end{corollary}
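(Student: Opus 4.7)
The strategy is to specialize Corollary \ref{cor:dold} to the identity map $f = \id_X$ on the compact ENR $X$, with $U = X$. Since $X$ is compact, the pair $(\id_X, X)$ is admissible: $\Fix(\id_X^k) = X$ is compact and equals its closure inside $X$ for every $k$, so the hypotheses of Definition \ref{def:zetaENR} (and hence of Corollary \ref{cor:dold}) are fulfilled.

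Next I would identify both sides of the equation provided by Corollary \ref{cor:dold} in this specific case. On the left, observe that $SP_n(\id_X) = \id_{SP_n(X)}$, and since $SP_n(X)$ is a compact ENR, the fixed point index of the identity map coincides with the Euler characteristic: $i(SP_n(\id_X), SP_n(X)) = \chi(SP_n(X))$. Thus the left-hand side of Corollary \ref{cor:dold} becomes exactly the generating function $1 + \sum_{n \ge 1} \chi(SP_n(X)) t^n$ we wish to evaluate. On the right, $\id_X^n = \id_X$ for every $n \ge 1$, so $i(\id_X^n, X) = \chi(X)$ is constant in $n$.

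Finally I would compute the exponential on the right-hand side using the elementary identity $\sum_{n \ge 1} t^n/n = -\log(1-t)$:
$$\exp\left(\sum_{n \ge 1} \frac{\chi(X)}{n} t^n\right) = \exp\bigl(-\chi(X) \log(1-t)\bigr) = (1-t)^{-\chi(X)}.$$
Combining the two computations yields the desired formula.

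There is no genuine obstacle here; the only points requiring a line of justification are the admissibility of $(\id_X, X)$ (immediate from compactness of $X$) and the equality $i(\id_Y, Y) = \chi(Y)$ for a compact ENR $Y$, which is a classical property of the fixed point index (see \cite{doldindice, granas, JM}). All the substantive work has already been done in proving Theorem \ref{thm:main} and Proposition \ref{prop:iszeta}, whose combination gave Corollary \ref{cor:dold}; the present statement is a direct specialization.
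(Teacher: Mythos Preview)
Your proof is correct and follows essentially the same approach as the paper: set $f = \id_X$ in Corollary \ref{cor:dold}, use $i(\id_Y, Y) = \chi(Y)$ for compact ENRs (the paper phrases this via the Lefschetz--Hopf theorem and the Lefschetz number), and evaluate the resulting exponential series. You have simply made explicit a few details the paper leaves implicit.
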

\begin{proof}
It suffices to set $f$ equal to the identity map in Corollary \ref{cor:dold} and note that by Lefschetz--Hopf theorem
$\chi(SP_n(X)) = \Lambda(SP_n(f), SP_n(X)) = i(SP_n(f), SP_n(X))$ and $i(f^n, X) = i(f, X) = \Lambda(f, X) = \chi(X)$.
\end{proof}

\subsection{Relationship between fixed point indices}

The formula in Corollary \ref{cor:dold} allows to obtain the sequences
$(i(f^n, U))_n$, $(i(SP_n(f), SP_n(U)))_n$ from one another.
More precisely, Lemma \ref{lem:despeja}, extracted from \cite[Chapter 5]{stanley}, yields the following proposition.
Notation is taken from the appendix.

\begin{proposition}
If $\mathbf{i} = (\mathbf{i}_n)_n = (i(f^n, U))_n$ and $\mathbf{s} = (\mathbf{s}_n)_n = (i(SP_n(f), SP_n(U)))_n$ then
$$
\mathbf{s}_n = \sum_{C \in \mathcal{C}_n} \frac{\pi(\mathbf{i}, C)}{\pi(C) \cdot |C|!}$$
and
$$
\mathbf{i}_n = \sum_{C \in \mathcal{C}_n} \frac{(-1)^{|C|+1}\pi(\mathbf{s}, C)}{|C|}.$$
%
%
Setting $\mathbf{s}_0 = 1$, $\mathbf{i}_0 = 0$, for $n \ge 0$
$$(n+1) \mathbf{s}_{n+1} = \sum_{j = 0}^n \mathbf{s}_{n-j} \mathbf{i}_{j+1}$$
or, equivalently,
$$\mathbf{i}_{n+1} = (n+1) \mathbf{s}_{n+1} - \sum_{j = 0}^{n-1} \mathbf{s}_{n-j} \mathbf{i}_{j+1}.$$
\end{proposition}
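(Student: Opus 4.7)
The plan is to derive everything from Corollary \ref{cor:dold}, which gives the generating function identity
$$S(t) := 1 + \sum_{n \ge 1} \mathbf{s}_n t^n = \exp\!\left(\sum_{n \ge 1} \frac{\mathbf{i}_n}{n}\, t^n\right) =: \exp(L(t)).$$
All four formulas in the proposition are formal consequences of this single identity together with Lemma \ref{lem:despeja} from the appendix, so the proof carries no further dynamical content.

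For the two recursions, I would differentiate both sides of $S(t) = \exp(L(t))$ to obtain the ODE $S'(t) = L'(t)\, S(t)$. Since $L'(t) = \sum_{n \ge 0} \mathbf{i}_{n+1} t^n$ and $S'(t) = \sum_{n \ge 0} (n+1)\mathbf{s}_{n+1} t^n$, matching the coefficient of $t^n$ yields
$$(n+1)\mathbf{s}_{n+1} = \sum_{j=0}^{n} \mathbf{s}_{n-j}\, \mathbf{i}_{j+1}.$$
Isolating the $j=n$ term, which equals $\mathbf{s}_0 \mathbf{i}_{n+1} = \mathbf{i}_{n+1}$ (using the convention $\mathbf{s}_0 = 1$), gives the equivalent inverted recursion.

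For the two closed-form formulas, the strategy is to apply Lemma \ref{lem:despeja} directly, since it packages exactly the combinatorial content of exponentiating or taking the logarithm of a formal power series. Expanding $\exp(L(t)) = \sum_{k \ge 0} L(t)^k/k!$ and extracting the coefficient of $t^n$ produces a sum indexed by the elements of $\mathcal{C}_n$, each term being the monomial $\pi(\mathbf{i}, C)$ weighted by the normalization $1/(\pi(C)\cdot |C|!)$; this gives the formula for $\mathbf{s}_n$. Dually, writing $L(t) = \log S(t) = \sum_{k \ge 1} (-1)^{k+1}(S(t)-1)^k/k$ and reading off the coefficient of $t^n$ yields the formula for $\mathbf{i}_n$ with the alternating factor $(-1)^{|C|+1}/|C|$.

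The main obstacle is purely notational: verifying that the conventions from the appendix (the precise meaning of $\mathcal{C}_n$, the weight $\pi(C)$, and the evaluation $\pi(\mathbf{x}, C)$) match those produced by the $\exp$ and $\log$ expansions, and that Lemma \ref{lem:despeja} is stated in exactly the form needed so that it applies verbatim to $L(t)$ and $S(t)-1$. Once this bookkeeping is carried out, all four identities follow without additional effort.
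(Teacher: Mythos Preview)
Your proposal is correct and follows essentially the same approach as the paper: the paper's proof invokes Lemma~\ref{lem:despeja} for the two closed-form identities and obtains the recursion by differentiating the formula in Corollary~\ref{cor:dold}, exactly as you do (you merely treat the recursion first). There is nothing to add.
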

\begin{proof}
The first part is given by Lemma \ref{lem:despeja} and the second part can be deduced from the first one or
simply by differentiating the formula in Corollary \ref{cor:dold}.
\end{proof}

Analogous versions of these results for partially admissible pairs hold as well.

\subsection{Dold congruences}

Since the arguments used in Section \ref{sec:maszeta} are very closed to the ones contained in \cite{dolditerates}, it is natural
to obtain as a corollary of our work the main result in that article, the so--called Dold congruences.
These relations were already present in the end of the proof of Proposition \ref{prop:iteratescharacterize},
but we give a direct proof here.

\begin{theorem}[Dold \cite{dolditerates}]
Let $(f, U)$ be an admissible pair. There exists integers $\{a_k\}_{k \ge 1}$ such that, for every $n \ge 1$,
$$i(f^n, U) = \sum_{k | n} k a_k.$$
\end{theorem}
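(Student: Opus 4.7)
The plan is to derive the congruences from the product structure of the Lefschetz zeta function, exploiting the identification $\z(f,U) = \exp\bigl(\sum_n \tfrac{i(f^n,U)}{n} t^n\bigr)$ established in Theorem \ref{prop:lefschetz}.

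First I would prove the algebraic lemma that any formal power series in $1 + t\cdot\Z[[t]]$ admits a unique representation as an (a priori infinite) product $\prod_{l \ge 1}(1 - t^l)^{-a_l}$ with $a_l \in \Z$. This is standard and proceeds by induction on the order: having matched the coefficients up to $t^{n-1}$, the $n$-th coefficient of $\prod_{l \le n-1}(1-t^l)^{-a_l}$ differs from the desired one by a well-defined integer which forces the value of $a_n$; once $a_n$ is fixed, multiplying by $(1-t^n)^{-a_n}$ leaves the first $n-1$ coefficients untouched and corrects the $n$-th. The product converges in $\Z[[t]]$ because each factor $(1-t^l)^{-a_l}$ affects only coefficients of degree $\ge l$.

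Next, apply this decomposition to the Lefschetz zeta function of $(f,U)$ to obtain integers $\{a_l\}_{l \ge 1}$ with
$$\z(f,U)(t) = \prod_{l \ge 1}(1 - t^l)^{-a_l}.$$
Take logarithms on both sides. On the one hand,
$$\log \z(f,U)(t) = \sum_{n \ge 1}\frac{i(f^n,U)}{n}\, t^n.$$
On the other hand, expanding each factor via $-\log(1-t^l) = \sum_{m \ge 1}\frac{t^{lm}}{m}$ gives
$$\log \prod_{l \ge 1}(1-t^l)^{-a_l} = \sum_{l \ge 1} a_l \sum_{m \ge 1}\frac{t^{lm}}{m} = \sum_{n \ge 1}\frac{t^n}{n}\sum_{l \mid n} l\, a_l,$$
after collecting terms with $n = lm$ (so that $1/m = l/n$). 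Comparing coefficients of $t^n$ yields the Dold congruences
$$i(f^n, U) = \sum_{l \mid n} l\, a_l.$$

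There is essentially no hard step here: the whole argument rests on the uniqueness of the product decomposition of $\z(f,U)$, which is a purely formal statement, combined with the exponential form of the Lefschetz zeta function already available from Theorem \ref{prop:lefschetz}. The only mildly delicate point is justifying convergence of the infinite product in $\Z[[t]]$, which follows from the observation that the factor $(1-t^l)^{-a_l}$ is congruent to $1$ modulo $t^l$. The integers $a_l$ produced this way are precisely the exponents $e_l$ appearing in the proof of Proposition \ref{prop:iteratescharacterize}, so the same formula is recovered.
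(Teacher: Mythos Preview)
Your argument is correct and essentially identical to the paper's: both decompose the zeta function as $\prod_{l\ge 1}(1-t^l)^{-a_l}$ with integer exponents (the paper cites this as Lemma~\ref{lem:descomposicion}, you sketch the inductive proof), take logarithms, and compare coefficients. The only cosmetic difference is that the paper phrases the decomposition in terms of $SP_\infty(f,U)$ via Corollary~\ref{cor:dold} rather than directly via Theorem~\ref{prop:lefschetz}, but by uniqueness these coincide.
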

\begin{proof}
By Lemma \ref{lem:descomposicion}, there are integers $(a_k)_{k \ge 1}$ such that
$$SP_{\infty}(f, U) = \prod_{k \ge 1}(1-t^k)^{-a_k} = \exp\left( \sum_{k \ge 1}a_k \sum_{m \ge 1} \frac{t^{mk}}{m} \right).$$
This series is also equal to the Lefschetz zeta function so, after taking logarithms,
the coefficient of the term $t^n$ on both sides is
$$\frac{i(f^n, U)}{n} = \sum_{k \ge 1} a_k \frac{1}{n/k} \enskip \Leftrightarrow \enskip i(f^n, U) = \sum_{k \ge 1} k a_k.$$
\end{proof}

\subsection{Consequences of the Hopf lemma}

A non--zero fixed point index is an indicator of the existence of a fixed point. Clearly, the converse
statement is not true because there exists fixed points with zero index. For instance, the origin
under the map $f(x) = x + x^2$. However, a small perturbation of the previous dynamics makes it
fixed point free. Let us introduce the following definition.

\begin{definition}
Let $V$ be an open subset of a topological space $X$ and $f : U \to X$ a map with an isolated
fixed point $p$. Then, $p$ is called \emph{avoidable} if for every neighborhood $W$ of $p$ in $U$ there exists
a homotopy $\{f_t : U \to X\}_{t = 0}^1$ such that $f_0 = f$, $f_t = f$ outside $W$ and $\Fix(f_1) \cap W = \emptyset$.
\end{definition}

A classical argument in degree theory which dates back to H. Hopf
shows that every fixed point with zero index in a manifold $X$
is avoidable (check Hopf's Lemma in \cite{JM} for the details).
Recall that the index $i(f, p)$ of a map $f$ at a fixed point $p$ can be defined as $i(f, U)$
for $U$ an open neighborhood of $p$ such that $\Fix(f) \cap \overline{U} = \{p\}$.

As an starting example consider a repelling fixed point of a one--dimensional map. Our considerations being
local and purely topological, we may assume that the fixed point is $0 \in \R$ and the map is $f(x) = 2x$.
Clearly, this fixed point is not avoidable, indeed, it has index $-1$. Now, let us look at the $2^{nd}$ symmetric product
and the induced map $SP_2(f) : SP_2(\R) \to SP_2(\R)$ defined by $SP_2(f)[x,y] = [2x, 2y]$.
The ``double point'' $[0, 0]$ is evidently fixed under $SP_2(f)$. Surprisingly, $[0, 0]$ is avoidable.
Indeed, for any $\epsilon > 0$ define the homotopy
$$F_t : [x, y] \mapsto [2x - \lambda t, 2y + \lambda t],$$
where $\lambda = \lambda([x, y]) = \max\{\epsilon - |x| - |y|, 0\}$. A straightforward computation shows that none
of the maps $F_t$ have fixed points for $t > 0$.

Suppose now that the origin is an attracting
fixed point of $f: \R \to \R$. It then follows that $i(f^n, 0) = 1$ for every $n \ge 1$.
Corollary \ref{cor:dold} yields $i(SP_n(f), [0, \ldots, 0]) = 1$. Note that $[0, \ldots, 0]$ is an isolated
fixed point of $SP_n(f)$. In particular, $[0, 0]$ is no longer avoidable.

\begin{proposition}
Let $f : \R \to \R$ be a map for which 0 is an isolated fixed point. The following statements are equivalent:
\begin{itemize}
\item $0$ is not an attractor for $f$.
\item $[0,0]$ is avoidable for $SP_2(f)$.
\item $(0,0)$ is an avoidable fixed point of $G : \{(b, c): \; b^2 \ge 4c \} \to \{(b, c): \; b^2 \ge 4c \}$ defined by
$$G(-\alpha_1 - \alpha_2, \alpha_1 \alpha_2) = (- f(\alpha_1) - f(\alpha_2), f(\alpha_1) f(\alpha_2)).$$
\end{itemize}
\end{proposition}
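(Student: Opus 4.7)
The equivalence of the second and third statements is an immediate consequence of Example \ref{ex:liao}: the Vieta map $\{\alpha_1, \alpha_2\} \mapsto (-\alpha_1 - \alpha_2, \alpha_1 \alpha_2)$ is a homeomorphism from $SP_2(\R)$ onto $\{(b,c): b^2 \ge 4c\}$ which sends $[0,0]$ to $(0,0)$ and conjugates $SP_2(f)$ with $G$; avoidability is preserved by such a conjugation.

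For the equivalence of the first two statements, the plan is to compute $i(SP_2(f), [0,0])$ and translate this numerical information into an avoidance statement. Fix a neighborhood $U$ of $0$ with $\Fix(f^k) \cap U = \{0\}$ for $k=1,2$, which is available since $[0,0]$ is isolated as a fixed point of $SP_2(f)$. Any fixed pair of $SP_2(f)$ in $SP_2(U)$ other than $[0,0]$ would come from a $2$-cycle of $f$ in $U$, so $[0,0]$ is the unique one. The $n=1$ instance of the recursion stated after Corollary \ref{cor:dold} then gives
$$i(SP_2(f), [0,0]) \;=\; \frac{i(f,0)^2 + i(f^2,0)}{2}.$$

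A one-dimensional case analysis identifies this index. If $f$ does not swap the two sides of $0$, iteration preserves the sign pattern of $x - f(x)$, so $i(f^n,0) = i(f,0) \in \{-1,0,1\}$; the value $+1$ characterises attractors and yields index $1$, while the other sign patterns yield index $0$. If $f$ does swap the two sides, then $i(f,0) = 1$ automatically, and since $f$ conjugates $f^2$ on the right side of $0$ with $f^2$ on the left side reversing orientation, $i(f^2, 0) \in \{-1,+1\}$, with $+1$ corresponding to $0$ attracting for $f^2$ and hence for $f$. Altogether $i(SP_2(f),[0,0])$ equals $1$ in the attractor subcases and $0$ otherwise; the easy direction, avoidable $\Rightarrow$ index $0$, follows from homotopy invariance and rules out the attractor case.

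For the reverse direction, non-attractor $\Rightarrow$ avoidable, I would extend the prototype homotopy used in the paragraph preceding the proposition for $f(x) = 2x$. Working in $SP_2(\R) = \{(x,y): x \le y\}$, let $u \le v$ denote the two components of $SP_2(f)[x,y]$ and set
$$F_t[x,y] \;=\; [\,u - \lambda(x,y)\,t,\; v + \lambda(x,y)\,t\,], \qquad \lambda(x,y) = \max(\epsilon - |x| - |y|,\,0).$$
A fixed pair of $F_t$ with $t > 0$ requires $u = x + \lambda t$ and $v = y - \lambda t$, which splits into a diagonal equation $f(x) - x = \lambda t,\; f(y) - y = -\lambda t$ or a cyclic equation $f(x) = y - \lambda t,\; f(y) = x + \lambda t$. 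In each non-attractor subcase the diagonal equation contradicts the sign pattern of $f(x) - x$ on the two sides of $0$ (combined with $x \le y$), and the cyclic equation, which describes an approximate $2$-cycle of $f$, is ruled out by the expansion of $f^2$ at $0$: the linear model $f(x) = ax,\; a < -1,$ produces the candidate $(x, y) = (-\lambda t/(1+a),\, \lambda t/(1+a))$ with $y - x = 2\lambda t/(1+a) < 0$, violating the ordering $x \le y$, and an implicit function argument extends this to any $f$ with $f^2$ repelling at $0$. The delicate step is precisely this orientation-reversing subcase, where the ordering constraint on $SP_2(\R)$ is what rescues the construction.
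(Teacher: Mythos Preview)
Your argument is essentially correct but takes a more laborious route than the paper's. The paper splits the non--attractor case in two. In the repelling case it relies on the explicit $SP_2$ homotopy worked out just before the proposition (for the model $x\mapsto 2x$, implicitly extended by topological conjugacy). In the remaining case, where $0$ is neither attracting nor repelling, it observes that then $f$ must be orientation--preserving with $i(f,0)=0$, so by Hopf's lemma $0$ is already avoidable for $f$ itself; applying such a homotopy of $f$ coordinatewise yields a homotopy of $SP_2(f)$ removing $[0,0]$. This sidesteps entirely the analysis of the diagonal/cyclic fixed--point equations for $F_t$ that you carry out. Your approach is more uniform---one homotopy formula for every non--attractor subcase---but the orientation--reversing step is left as ``an implicit function argument,'' which is vague; the cleaner way to finish that subcase is to note that any orientation--reversing repeller is topologically conjugate to $x\mapsto -2x$ and transport your computation through the conjugacy, rather than perturb off the linear model.

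One small point worth making explicit in your write--up: the recursion you invoke for $i(SP_2(f),[0,0])$ presupposes that $[0,0]$ is isolated for $SP_2(f)$, i.e.\ that $0$ is isolated in $\Fix(f^2)$ as well; this is automatic in the orientation--preserving case (an increasing map has no local $2$--cycles) and follows from your conjugacy observation in the orientation--reversing case, but the proposition as stated only assumes $0$ isolated in $\Fix(f)$.
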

\begin{proof} The third point is equivalent to the second one in view of Example \ref{ex:liao}.
For the other equivalence, note that if $0$ is neither attracting nor repelling then its index is 0.
Thus, $0$ is avoidable for $f$ and, as a consequence, $[0,0]$ and $(0,0)$ are also avoidable for $SP_2(f)$ and $G$, respectively.
\end{proof}

The fact that $SP_n(\R^2)$ is homeomorphic to $\R^{2n}$ (Example \ref{ex:liao}) implies that $n^{th}$ symmetric products of surfaces
are $2n$--dimensional manifolds. Hopf's Lemma may then be used to conclude the avoidability of
index--0 fixed points in $SP_n(M)$ for any surface $M$.

Fixed point indices of maps on surfaces have been extensively studied.
Let $f$ be an orientation--preserving homeomorphism on a surface and $p$ an isolated fixed point.
As a culmination of partial results obtained by several authors,
Le Calvez \cite{lecalvezens} proved a formula for $i(f^n, p)$ which reads as follows
$$\z(f, p) = \exp \left( \sum_{n \ge 1} \frac{i(f^n, p)}{n} \cdot t^n \right) = \frac{1}{1 - t}(1 - t^q)^{r},$$
where $\z(f, p)$ is called Lefschetz zeta function at $p$. Notice that $\z(f, p)$
is a polynomial iff $r$ is positive. In such a case, its degree is $rq - 1$.
Interestingly, it is known that if $\{p\}$ is locally maximal and neither an attractor nor a repeller
then $r$ is always positive. Furthermore, this is also the case when $f$ is area--preserving and the
sequence $(i(f^n, p))_n$ is not constant equal to 1.

Under the previous assumptions, $[p, \ldots, p]$ is fixed under $SP_n(f)$.
By Corollary \ref{cor:dold}, $i(SP_n(f), [p,\ldots,p]) = 0$ provided $r$ is positive and $n \ge rq$.
Since the symmetric product of a surface is a manifold, Hopf's Lemma can be used to conclude
that $[p, \ldots, p]$ is an avoidable fixed point of $SP_n(f)$. The number $rq$ can be thought of
as a lower bound on the number of unstable branches of the local dynamics of $f$ around $p$.
In the next paragraphs, we give, in a toy case, an explicit description of a homotopy removing the fixed point
which uses the unstable branches in a crucial way.

Let $Y = R_1 \cup R_2 \cup R_3$ be the union of three infinite sticks or half--lines with a common endpoint $p$.
Assume $p$ is a global repeller for $f : Y \to Y$.
A standard computation shows that, regardless how $f$ permutes the sticks, the zeta function of $f$ at $p$ is a
polynomial of degree 2 hence
$i(SP_3(f), [p,p,p]) = 0$ and $[p,p,p]$ is avoidable. For simplicity, in the following we assume $f$ does not
interchange the sticks.

Consider three particles moving freely within $Y$ subject to two--type of external forces: particle--center and particle--particle.
A constant force, independent of the distance, of magnitude $\epsilon$ repels every particle from $p$, center of $Y$.
There is another repelling force of much greater magnitude $M \epsilon$, $M \gg 1$, acting between every pair of particles
only when they are placed in the same stick. Two particles may occupy the same position.
The forces generate a motion in the 3--particle system.
For the evolution to be completely specified the behavior of the particles as they go through $p$ must also be prescribed.
A particle located at $p$ is set to enter an empty stick.
The same rule is applied if there are two or three particles at $p$, each of them enters a different
empty stick. In the particular case there is just one particle at $p$ and two empty sticks, the particle is frozen and does
neither interact with the others nor with the center until another particle arrives at $p$.
This bizarre behavior is imposed just to preserve symmetry.
Note, incidentally, that the evolution always forces the particles eventually lie in different sticks.

The set of possible positions of the particles can be identified to $SP_3(Y)$.
Denote $\phi_{\epsilon}$ the time--1 map of the flow generated in $SP_3(Y)$. Trivially, $\phi_0 = \id$
and the sum of the distances from each of the three points of an element of $SP_3(Y)$
 to $p$ increases under the action of $\phi_{\epsilon}$,
for any $\epsilon > 0$. The maps $SP_3(f)_{\epsilon} = SP_3(f) \circ \phi_{\epsilon}$, $0 \le \epsilon \le 1$, define
a homotopy composed of fixed point free maps except from the starting map $SP_3(f)_0 = SP_3(f)$.
This shows that $[p,p,p]$ is avoidable.
The construction can be made local just by imposing the involved forces only apply in a
neighborhood of $p$ and can also be extended to systems with more particles by suitably prescribing the behavior at $p$.
The language used to articulate this example is fortuitously related to the ``charged'' particles used by McDuff in
a paper on configuration spaces \cite{mcduff}.

Figure \ref{fig:3branches} shows the portrait of a typical dynamics around a locally fixed point $p$ of a planar homeomorphism $g$
with 3 unstable branches for which $\{p\}$ is locally maximal. For simplicity suppose the picture extends similarly to
$\R^2$ and identify the union of the 3 unstable branches with $p$ to the set $Y$.
The map $g$ leaves $Y$ invariant, denote $f = g_{|Y}$. The plane deformation
retracts to $Y$ via maps $r_t : \R^2 \to \R^2$ such that $r_0 = \id$ and $r_t(x)$ is independent of $t$ and
belongs to $Y$ iff $t \ge \dist(x, Y)$. Define $r(x) = r_t(x)$ for large $t$.
Then, for any $\epsilon \ge 0$, we can define maps $SP_3(g)_{\epsilon}: SP_3(\R^2) \to SP_3(\R^2)$ by
\begin{equation*}
SP_3(g)_{\epsilon}([x_1, x_2, x_3]) =
\begin{cases}
SP_3(g)([r_{\epsilon}(x_1), r_{\epsilon}(x_2), r_{\epsilon}(x_3)]) & \text{if } d \ge \epsilon \\
SP_3(f)_{\epsilon - d}([r(x_1), r(x_2), r(x_3)])          & \text{otherwise,}
\end{cases}
\end{equation*}
where $d = \max_i \dist(x_i, Y)$. Again all maps $SP_3(g)_{\epsilon}$ are fixed point free for $\epsilon > 0$.
This homotopy shows how the ``triple point'' $[p, p, p]$ vanishes without leaving trace in the set of fixed points,
hence proving it is avoidable.

\begin{figure}[htb]
\begin{center}
\includegraphics[scale = 0.5]{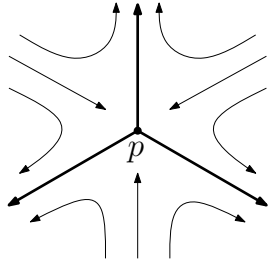}
\end{center}
\caption{Dynamics conjugate to $z \mapsto z + \bar{z}^4$ around the origin. The fixed point is locally maximal and has
3 unstable branches (painted heavier).}
\label{fig:3branches}
\end{figure}

Remind that similar constructions can just be done for the $n^{th}$ symmetric product as long as $n$ exceeds the number
of local unstable branches at the fixed point $p$. Note that in our previous
example any $n$--tuple contains a point sliding away from $p$ through every unstable branch.
The index computation suggests this property is matched by every such homotopy.

\appendix

\section{Formal power series}

Given a commutative ring with unity $R$, the ring of formal power series with coefficients in $R$, denoted $R[[t]]$, is defined as the set
of series $a(t) = \sum_{n \ge 0} a_n t^n$ where $a_n \in R$ equipped with the operations
$$(a + b)(t) := \sum_{n \ge 0} (a_n + b_n) t^n, \enskip \enskip
(a \cdot b)(t) = \sum_{n \ge 0} \left( \sum_{i = 0}^n a_i b_{n - i} \right) t^n.$$

An alternative definition involves the ring of polynomials $R[t]$ with coefficients in $R$ and
the sequence of ideals $t^n \cdot R[t]$. Then $R[[t]]$ can be identified with the inverse limit
of the sequence of rings $(R[t]/(t^n \cdot R[t]))_{n \ge 1}$ connected with the morphisms given by the natural projections.

The set $1 + t \cdot R[[t]]$ of series whose independent term is equal to 1 forms a subgroup of $R[[t]]$
with multiplication: it is closed under multiplication and the inverse of an element
$1 +  \sum_{n \ge 1} a_n t^n$ is the series $\sum_{n \ge 0} b_n t^n$ whose coefficients are defined inductively by:
$$b_0 = 1, \enskip \enskip b_n = - \sum_{i = 1}^n a_i b_{n-i}.$$
Evidently, $1 + t^n \cdot R[[t]]$ is a subgroup of $1 + t \cdot R[[t]]$ for any $n \ge 1$.

The zeta function of an admissible pair $(f, U)$ is an element of the group $1 + t \cdot \Z[[t]]$.
In a cumbersome way, it can be viewed as an inverse sequence in $\varprojlim (1 + t \cdot \Z[[t]])/(1 + t^{n+1} \cdot \Z[[t]])$.
The $n^{th}$--element of this sequence being the zeta function defined up to multiples of $t^{n+1}$, that is exactly
the zeta function $\z_n(f, U)$ of $(f, U)$ viewed as an $n$--admissible pair.

Assume henceforth that $R = \Q$ a field. We may define the following maps
\begin{equation}\label{eq:explog}
 \begin{aligned}
 \exp : t \cdot \Q[[t]] \to 1 + t \cdot \Q[[t]], \enskip \enskip \exp(x) = 1 + x + \frac{x^2}{2!} +  \frac{x^3}{3!} + \ldots\\
 \log : 1 + t \cdot \Q[[t]] \to t \cdot \Q[[t]], \enskip \enskip \log(1 - x) = - x - \frac{x^2}{2} - \frac{x^3}{3} - \ldots
 \end{aligned}
\end{equation}

\begin{lemma}
The maps $\exp, \log$ are mutually inverse isomorphisms between the groups $(t \cdot \Q[[t]], +)$ and
$(1 + t \cdot \Q[[t]], \cdot)$.
\end{lemma}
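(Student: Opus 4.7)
The plan is to establish the lemma in three stages: well-definedness, the homomorphism property, and inverse relations.

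First I would check that $\exp$ and $\log$ are well-defined set maps. Since any $x \in t\cdot\Q[[t]]$ has zero constant term, $x^n$ lies in $t^n \cdot \Q[[t]]$, so each coefficient of $\exp(x) = \sum x^n/n!$ is a finite $\Q$-linear combination of coefficients of the $x^n$; the same holds for $\log(1-y)$ with $y \in t\cdot\Q[[t]]$. In particular, $\exp$ sends $t\cdot\Q[[t]]$ into $1 + t\cdot\Q[[t]]$ and $\log$ sends $1 + t\cdot\Q[[t]]$ into $t\cdot\Q[[t]]$.

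Next I would verify the homomorphism identity $\exp(x+y) = \exp(x)\cdot\exp(y)$. The cleanest route is to expand both sides by the binomial theorem for formal power series: the coefficient of $x^i y^j$ on the left is $\binom{i+j}{i}/(i+j)! = 1/(i!\,j!)$, which matches the $(i,j)$-term coming from multiplying the two series on the right. This immediately makes $\exp$ a group homomorphism from $(t\cdot\Q[[t]], +)$ to $(1+t\cdot\Q[[t]], \cdot)$.

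Then I would show $\log$ is an inverse of $\exp$ using the formal derivative $D$ on $\Q[[t]]$ (the map $\sum a_n t^n \mapsto \sum n a_n t^{n-1}$). The basic facts I need are the $t$-adic continuity of $D$, the Leibniz rule, and the chain rules $D(\exp(x)) = \exp(x)\cdot D(x)$ and $D(\log(1+y)) = D(y)/(1+y)$, all of which reduce to coefficient computations on monomials. Using these: for $F(t) = \log(\exp(x(t)))$ one has $F(0)=0$ and $D(F) = D(x)$, so by uniqueness $F = x$; for $G(t) = \exp(\log(1+y(t)))$ and $H(t) = 1+y(t)$ one checks $D(G)/G = D(H)/H$ and $G(0)=H(0)=1$, which forces $G = H$. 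This establishes that $\exp$ and $\log$ are mutually inverse bijections, and combined with the homomorphism property of $\exp$, completes the proof that both are group isomorphisms.

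The only step requiring real care is the chain rule for $\log$ and $\exp$ in the formal setting, since the usual analytic justification is unavailable. I would handle it by noting that both sides are continuous in the $t$-adic topology in the variable $x$ (or $y$), so it suffices to verify the identity on polynomial inputs, where it reduces to a finite computation using the Leibniz rule and $D(x^n) = n x^{n-1} D(x)$. Everything else is formal manipulation.
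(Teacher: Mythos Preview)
The paper does not prove this lemma; it is simply stated in the appendix as a standard fact about formal power series and then used to define exponentiation in $1+t\cdot\Q[[t]]$. Your proposal is correct and supplies the details the paper omits.

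One small point worth sharpening: in your verification of $\exp(x+y)=\exp(x)\exp(y)$ you compare ``the coefficient of $x^iy^j$'' on both sides, but $x$ and $y$ are specific elements of $t\cdot\Q[[t]]$, not independent indeterminates. What you are really doing is proving the identity $\exp(X+Y)=\exp(X)\exp(Y)$ in $\Q[[X,Y]]$ and then applying the substitution homomorphism $X\mapsto x$, $Y\mapsto y$, which is well defined and continuous precisely because $x,y\in t\cdot\Q[[t]]$. Making that step explicit would remove any ambiguity. Your derivative argument for the inverse relations is clean: it uses only that $\ker D$ consists of constants in $\Q[[t]]$ and that $1+y$ is a unit, both of which are immediate.
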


These maps are now used to define exponentiation to arbitrary formal power series
in the group $(1 + t \cdot \Q[[t]], \cdot)$. Given $1 - x \in 1 + t \cdot \Q[[t]]$ and $y \in \Q[[t]]$, define
$$(1 - x)^y := \exp(y \log(1 - x)).$$
However, we are interested only in the case where $y$ is a rational number. The definition of $\exp$ and $\log$
ensure that the exponentiation satisfies the usual properties.

\begin{lemma}\label{lem:descomposicion}
Any element of $1 + t \cdot \Q[[t]]$ is equal to the product of factors of type $(1 - t^n)$ raised to the power
of a rational number. This factorization is unique. Furthermore, all the exponents are integers iff the series has
integral coefficients.
\end{lemma}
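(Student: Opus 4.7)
The plan is to construct the factorization inductively, exploiting the single observation that $(1-t^n)^{c} = 1 - c\, t^n + O(t^{n+1})$ in $1 + t\cdot\Q[[t]]$ for any $c \in \Q$. Thus multiplying a series by such a factor only alters coefficients from degree $n$ onwards, and alters the coefficient of $t^n$ in a linear, invertible way.

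\textbf{Existence and uniqueness.} Given $P(t) = 1 + \sum_{k \ge 1} a_k t^k$, set $R_1(t) := P(t)$ and define $e_n \in \Q$ recursively as follows. Assume $R_N(t) = 1 + r_N t^N + O(t^{N+1})$ has been obtained together with rationals $e_1, \dots, e_{N-1}$ satisfying
\[
R_N(t) \;=\; P(t) \cdot \prod_{n=1}^{N-1} (1 - t^n)^{-e_n}.
\]
Set $e_N := r_N$; then $R_N(t)\cdot (1 - t^N)^{e_N} = 1 + O(t^{N+1})$, so the procedure continues. The partial products $Q_N(t) := \prod_{n=1}^{N}(1-t^n)^{e_n}$ satisfy $Q_N(t) \equiv P(t) \pmod{t^{N+1}}$, hence converge in the $t$-adic topology to $P(t)$, proving existence. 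Uniqueness is built into the induction: any choice of $e_N$ other than $r_N$ leaves a nonzero coefficient at degree $N$, so the requirement $P = \prod(1-t^n)^{e_n}$ forces $e_N$ at each step.

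\textbf{Integrality.} Assume $P \in 1 + t\cdot\Z[[t]]$ and suppose by induction that $e_1, \dots, e_{N-1} \in \Z$. Then $\prod_{n=1}^{N-1}(1-t^n)^{-e_n} \in \Z[[t]]$, so $R_N(t) \in \Z[[t]]$, whence $e_N = r_N \in \Z$. Conversely, if every $e_n \in \Z$, each truncation $Q_N$ lies in $\Z[[t]]$, and since $Q_N \to P$ in the $t$-adic topology (which is closed on $\Z[[t]]$), also $P \in \Z[[t]]$.

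\textbf{Main obstacle.} There is no genuine obstacle; the proof is essentially bookkeeping once the triangular structure of $(1-t^n)^{e_n} = 1 - e_n t^n + O(t^{n+1})$ is recognized. The one point that needs to be explicitly addressed is the well-definedness of the infinite product $\prod_{n \ge 1}(1-t^n)^{e_n}$ in $\Q[[t]]$, which is handled by the $t$-adic convergence of the partial products, a direct consequence of each factor being congruent to $1$ modulo $t^n$.
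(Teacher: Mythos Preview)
Your argument is correct in strategy and close to the paper's, but there is a sign slip in the recursion: with your displayed formula $R_N = P\cdot\prod_{n<N}(1-t^n)^{-e_n}$ one has $R_{N+1}=R_N\cdot(1-t^N)^{-e_N}=1+(r_N+e_N)t^N+O(t^{N+1})$, so killing the degree-$N$ coefficient forces $e_N=-r_N$, not $e_N=r_N$. (Equivalently, keep $e_N=r_N$ but drop the minus sign in the exponent of the displayed formula; then however $Q_N\to P^{-1}$ rather than $P$.) Once this is fixed, the existence, uniqueness, and integrality arguments all go through as you wrote them.

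For comparison, the paper packages existence and uniqueness via the $\exp/\log$ isomorphism between $(1+t\cdot\Q[[t]],\cdot)$ and $(t\cdot\Q[[t]],+)$, observing that $(\log(1-t^n))_{n\ge1}$ is a $\Q$-basis of the additive side. Your direct inductive construction is precisely what one would do to justify that basis claim, so the two arguments are equivalent in content; yours is more self-contained, the paper's more conceptual. For the integrality statement the paper uses exactly the same inductive peeling-off argument you give.
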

\begin{proof}
The sequence $(\log(1 - t^n))_{n \ge 1}$ forms a basis of the free $\Q$--module $t \cdot \Q[[t]]$.
Thus, given any element $1 - x \in 1 + t \cdot \Q[[t]]$ there exists unique rational numbers $q_n$ such that
$$\log(1 - x) = \sum_{n \ge 1} q_n \log(1 - t^n)$$
or, in other words,
$$1 - x = \prod_{n \ge 1} (1 - t^n)^{q_n}.$$
For the last statement assume $a_m t^m$ is the smallest nonzero monomial of $x$ ($m \ge 1$).
If we set $y(t) = 1/(1-t^m)^{a_m} = 1 + a_m t^m + O(t^{2m})$ then all the coefficients of order 1 to $m$ of $(1 - x) \cdot y$
vanish. Thus, $q_m = a_m \in \Z$ and the argument can be carried on by induction on $m$.
\end{proof}

For a positive integer $n \ge 1$, denote $\mathcal{C}_n$ the set of compositions of $n$. Recall that a composition $C$ of $n$
is an ordered set of positive integers such that their sum is $n$. The size of $C = \{c_1,\ldots,c_k\}$
will be denoted $|C| = k$ and the product of its elements $\pi(C) = c_1 \cdot \ldots \cdot c_k$.
Given a composition $C$ and a sequence $a = (a_n)_n$ we define
$$\pi(a, C) = \prod_{j = 1}^{|C|} a_{c_j}.$$
In our setting it is simpler to work with compositions than with partitions, which do not take into account the order
of its elements.
The following lemma (cf. \cite[Chapter 5]{stanley}) is a consequence of the power series expansions (\ref{eq:explog})
of $\exp(x)$ and $\log(x)$.
\begin{lemma}\label{lem:despeja}
Let $(i_n)_n$ be a sequence of rational numbers and $z(t) = \sum_{n \ge 1} \frac{i_n}{n}t^n$.
Then, the coefficient of $t^n$ of the formal power series $\exp(z(t))$ is
$$\sum_{C \in \mathcal{C}_n} \frac{\pi(i, C)}{\pi(C) \cdot |C|!} 
.$$
Conversely, let $(s_n)_n$ be a sequence of rational numbers and $y(t) = 1 + \sum_{n \ge 1} s_n t^n$.
Then, the coefficient of $t^n$ of the formal power series $\log(y(t))$ is
$$\sum_{C \in \mathcal{C}_n} \frac{(-1)^{|C|+1}\pi(s, C)}{|C|}.$$
\end{lemma}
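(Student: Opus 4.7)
The plan is to unfold both formal-power-series identities directly from the definitions of $\exp$ and $\log$ given in (\ref{eq:explog}) and then reorganize the resulting sums by compositions of $n$. The only combinatorial observation needed is that a composition $C=\{c_1,\ldots,c_k\}\in\mathcal{C}_n$ is exactly the same data as a $k$-tuple $(c_1,\ldots,c_k)$ of positive integers with $c_1+\cdots+c_k=n$, so once the length $k=|C|$ is fixed, summing over length-$k$ compositions coincides with summing over such ordered tuples.

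For the first identity, I would start from
\[
\exp(z(t))=\sum_{k\ge 0}\frac{z(t)^k}{k!}
\]
and expand the $k$-th power
\[
z(t)^k=\Bigl(\sum_{n\ge 1}\frac{i_n}{n}t^n\Bigr)^{\!k}=\sum_{(c_1,\ldots,c_k)}\prod_{j=1}^{k}\frac{i_{c_j}}{c_j}\,t^{c_1+\cdots+c_k}.
\]
Collecting the monomials of degree $n$, the contribution to $[t^n]z(t)^k$ is $\sum_{C\in\mathcal{C}_n,\,|C|=k}\pi(i,C)/\pi(C)$. Dividing by $k!$ and summing over $k\ge 1$ (the $k=0$ term contributes nothing to $[t^n]$ for $n\ge 1$) yields the claimed formula once we regroup by $|C|$.

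For the second identity, I would use $y(t)=1-(-x(t))$ with $x(t)=\sum_{n\ge 1}s_n t^n$ and plug into the defining series of $\log$:
\[
\log(y(t))=-\sum_{m\ge 1}\frac{(-x(t))^m}{m}=\sum_{m\ge 1}\frac{(-1)^{m+1}}{m}\,x(t)^m.
\]
Expanding $x(t)^m=\sum_{(c_1,\ldots,c_m)}\prod_{j=1}^{m}s_{c_j}\,t^{c_1+\cdots+c_m}$ and taking the coefficient of $t^n$ gives $[t^n]x(t)^m=\sum_{C\in\mathcal{C}_n,\,|C|=m}\pi(s,C)$. Regrouping the outer sum over $m\ge 1$ as a sum over all $C\in\mathcal{C}_n$ with $m=|C|$ produces the stated expression.

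There is no real obstacle here; the lemma is a purely formal expansion. The only point that deserves a brief justification is that the rearrangements are legitimate at the level of formal power series, which is immediate because for each fixed $n$ only finitely many tuples $(c_1,\ldots,c_k)$ of positive integers can contribute to $[t^n]$ (namely those with $k\le n$), so the double sums are finite and may be freely reordered. This finiteness, together with the composition-versus-tuple identification, is really all that is going on.
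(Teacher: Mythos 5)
Your proof is correct and is exactly the argument the paper has in mind: the paper simply cites Stanley and states the lemma "is a consequence of the power series expansions of $\exp$ and $\log$," and you have carried out that expansion-and-regrouping by compositions in full. Nothing further is needed.
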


\end{document}